\documentclass[10pt,reqno]{amsart}
\usepackage{hyperref}
\usepackage{cite}

\usepackage{comment}
\usepackage{graphicx}
\usepackage{amssymb}
\usepackage{amsfonts}
\usepackage[]{amsmath}
\usepackage[]{epsfig}
\usepackage{mathtools}
\mathtoolsset{showonlyrefs}
\usepackage[]{float}
\usepackage{setspace}
\usepackage{tikz}
\newtheorem{theorem}{Theorem}[section]

\newtheorem{definition}{Definition}[section]
\newtheorem{remark}{Remark}[section]
\newtheorem{proposition}{Proposition}[section]
\newtheorem{corollary}{Corollary}[section]

\numberwithin{equation}{section}
\newcommand{\R}{\mathbb R}

\newcommand{\eps}{\varepsilon}

\usepackage[english, activeacute]{babel}
\usepackage{amsmath,amsthm,amsxtra}
\usepackage{epsfig}
\usepackage{amssymb}
\usepackage{latexsym}
\usepackage{amsfonts}
\usepackage{hyperref}
\usepackage{pdftricks}
\newtheorem{thm}{Theorem}[section]

\newtheorem{prop}[thm]{Proposition}

\theoremstyle{remark}

\numberwithin{equation}{section}

\newcommand{\supp}{\operatorname{supp}}

\def\bm{\left( \begin{array}{cc}}
\def\endm{\end{array}\right)}

\newcommand{\be}{\begin{equation}}
\newcommand{\ee}{\end{equation}}
\newcommand{\ba}{\left(\begin{array}{c}}
\newcommand{\ea}{\end{array}\right)}
\newcommand{\bea}{\begin{eqnarray}}
\newcommand{\eea}{\end{eqnarray}}
\newcommand{\bee}{\begin{eqnarray*}}
\newcommand{\eee}{\end{eqnarray*}}
\newcommand{\ben}{\begin{enumerate}}
\newcommand{\een}{\end{enumerate}}

\newcommand{\ep}{\varepsilon}

\newcommand{\qtq}[1]{\quad\text{#1}\quad}

\begin{document}
\pagenumbering{arabic}	
\title[Wave equation with localized nonlinearity]{Scattering and nonlinear recovery for the \\ 1$d$ NLW with localized nonlinearity}
\author[L. Campos]{Luccas Campos}
\address{Department of Mathematics, Federal University of Minas Gerais}
\email{luccascampos@gmail.com}

\author[J. Murphy]{Jason Murphy}
\address{Department of Mathematics, University of Oregon}
\email{jamu@uoregon.edu}

\author[R. Scarpelli]{Renzo Scarpelli}
\address{Department of Mathematics, Federal University of Minas Gerais}
\email{renzoscb@ufmg.br}

\begin{abstract}
We consider the wave equation in 1$d$ with a spatially-localized cubic nonlinearity. We establish scattering in a weighted space and demonstrate the injectivity of the scattering map, with a corresponding stability estimate for nonlinear recovery.
\end{abstract}

\maketitle

\section{Introduction}
We consider the following one-dimensional wave equation:
\begin{equation}\label{NONLIN}
\begin{cases}
\Box u = Ku^3, \\
(u,\partial_t u)|_{t=0}= (u_0,u_1).
\end{cases}
\end{equation}
Here $u:\R_t\times \R_x\to\R$, $\Box := -\partial_t^2 + \partial_x^2$ and $K:\R\to\R$. Using standard energy arguments, one can readily obtain global well-posedness for \eqref{NONLIN} for $(u_0,u_1)\in H^1\times L^2$ and $K\in L^\infty$ (see Proposition~\ref{P:GWP} below). 

The case $K\equiv 0$ reduces to the $1d$ linear wave equation, which may be solved explicitly by the d'Alembert formula.  In particular, all solutions are sums of traveling waves of the form
\[
u(t,x) = f(x+t)+g(x-t). 
\] 
The case $K\equiv 1$ instead yields the defocusing power-type model previously considered in works such as \cite{Tao, Wei}. In this case, solutions exhibit quite different long-time behavior.  Indeed, solutions decay (in $L_x^\infty$, say) as $|t|\to\infty$.  In particular, the defocusing nonlinearity induces a nonlinear decay mechanism, which may be realized through the use of Morawetz estimates. 

Our interest in this work is to study \eqref{NONLIN} in the intermediate case, in which nonlinearity is present but restricted to a finite region of space.  To obtain the nonlinear decay mechanism, we assume that the function $K$ is nonnegative and satisfies a repulsivity condition.  The precise assumptions we impose are as follows. 

\begin{definition}\label{D:admissible}
We call $K: \R \to \R$ \emph{admissible} if $K$ is smooth, nonnegative, compactly supported in $[-1,1]$, and satisfies (i) $K(0)>0$ and (ii) the repulsivity condition $xK'(x)\leq 0$ for all $x\in\R$. 
\end{definition}

For \eqref{NONLIN} with admissible $K$, it is then natural to ask about the behavior of solutions as $|t|\to\infty$: in particular, do solutions decay, or do they scatter to linear solutions?  This seems like a strange question at first, as decay and scattering typically go hand-in-hand.  However, in light of the discussion above, this is indeed the right question to ask. 

Our first result is a scattering result for initial data $(u_0,u_1)\in H^1\times L^2$ of compact support.  In particular, we see that when the nonlinearity is localized in space, solutions do not decay globally in $L_x^\infty$ (see e.g. Remark~\ref{R:nodecay}) and instead exhibit linear behavior as $t\to\infty$, provided we measure the solution in a suitable topology. 

To state our results precisely, we define 
\begin{equation}\label{XY}
\begin{aligned}
X&=\{(u_0,u_1)\in H^1\times L^2:(u_0,u_1)\text{ are compactly supported}\},\\
Y&= \Dot{H}^1(\mathbb{R},\langle x\rangle^{-1}\,dx)\times L^2(\mathbb{R},\langle x\rangle^{-1}\,dx),
\end{aligned}
\end{equation}
and we use $S(t)$ to denote the linear solution operator (see \eqref{Group} below). 

Our first result is the following: 

\begin{theorem}\label{T1} Let $K$ be admissible and $(u_0,u_1)\in X$.  Let $u$ be the corresponding global solution to \eqref{NONLIN}.  Then there exist unique $(U_+,V_+)\in Y$ such that 
\begin{equation}\label{eq:definition_scattering_in_X}
\lim_{t \to \infty} \| S(-t)(u(t),\partial_t u(t))-(U_+,V_+)\|_Y = 0.
\end{equation}
\end{theorem}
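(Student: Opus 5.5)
Following the standard approach, we write the solution via Duhamel's formula,
\[
(u(t),\partial_t u(t)) = S(t)(u_0,u_1) - \int_0^t S(t-s)\bigl(0, K u^3(s)\bigr)\,ds ,
\]
so that $S(-t)(u(t),\partial_tu(t)) = (u_0,u_1) - \int_0^t S(-s)(0,Ku^3(s))\,ds$. The claim follows once we show that this integral converges in $Y$ as $t\to\infty$. The limit is then
\[
(U_+,V_+) = (u_0,u_1) - \int_0^\infty S(-s)(0,Ku^3(s))\,ds ,
\]
and uniqueness is immediate because $Y$ is a normed space. Since the norm of $Y$ carries the weight $\langle x\rangle^{-1}$, we need an estimate of the form
\[
\bigl\|S(-s)(0,F)\bigr\|_Y \lesssim \|F\|_{L^1_x} + \dots
\]
for $F$ supported in $[-1,1]$. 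Using d'Alembert, $S(-s)(0,F)$ has components $\tfrac12\int_{x-s}^{x+s}F$ and $\tfrac12(F(x+s)+F(x-s))$. The derivative of the first is $\tfrac12(F(x+s)-F(x-s))$, supported near $x=\pm s$. In $L^2(\langle x\rangle^{-1}dx)$ this is bounded by $\langle s\rangle^{-1/2}\|F\|_{L^2}$. So we want $\int_0^\infty \langle s\rangle^{-1/2}\|Ku^3(s)\|_{L^2}\,ds<\infty$, or more carefully a Cauchy criterion. Here we should use orthogonality: contributions from different $s$ are supported at different $x\approx\pm s$. The $Y$-norm squared of $\int_{t_1}^{t_2}$ is then essentially
\[
\sum_{\pm}\int \langle x\rangle^{-1}\Bigl|\int_{t_1}^{t_2} (Ku^3)(s,\pm s+x)\,ds\Bigr|^2 dx .
\]
Because $K$ is supported in $[-1,1]$, for $x$ near $\mp s$ the integrand lives only on $s\in[|x|-1,|x|+1]$. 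This gives the bound $\lesssim \int_{t_1-1}^{t_2+1}\langle s\rangle^{-1}\|Ku^3(s)\|_{L^2}^2\,ds$. Since $u$ is bounded in $H^1\subset L^\infty$ by energy conservation (Proposition~\ref{P:GWP}), it even suffices to have
\[
\int_0^\infty \langle s\rangle^{-1}\int K u^4 \,dx\,ds<\infty .
\]
In fact even $\int_0^\infty\int K u^4\,dx\,ds<\infty$ would do.

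\textbf{Key step: a local-in-space Morawetz estimate.} We use the multiplier $a(x)\partial_x u$ with $a$ bounded and increasing, e.g. $a(x)=\tanh x$ or $a=x/\langle x\rangle$. The resulting identity contains:
\begin{itemize}
\item a term $a'(|\partial_tu|^2+|\partial_xu|^2)$ of positive sign;
\item a nonlinear term $\tfrac14\int (aK)' u^4$, where $(aK)' = a'K + aK'$, and $aK'\le0$ by repulsivity gives the favorable sign;
\item boundary terms bounded by the energy.
\end{itemize}
This yields $\int_0^\infty \int \bigl(a' u_x^2 + a'u_t^2 + a'Ku^4 - aK'u^4\bigr) \lesssim E$. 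The nonlinear term then needs $a'K\ge c\,K$ on $[-1,1]$, which holds since $a'>0$ there. Hence $\int_0^\infty\int Ku^4\,dx\,ds\lesssim E(u_0,u_1)$. Sign bookkeeping (defocusing versus the convention $\Box=-\partial_t^2+\partial_x^2$ with $+Ku^3$) must be checked: the energy is $\int \tfrac12 u_t^2+\tfrac12u_x^2+\tfrac14Ku^4$, which is positive. Compact support of the data is used to ensure finite energy and perhaps to control a weighted $L^2$ part of the first component. The main obstacle is precisely this Morawetz estimate with the correct signs, together with possibly a lower-order term like $a''u^2$ if a Lagrangian correction is needed. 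With the simple $a\partial_xu$ multiplier no $u^2$ term appears, so it should close.

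\textbf{Conclusion.} Finally, we verify that $S(-t)(u(t),\partial_tu(t))$ actually lies in $Y$: the linear part $(u_0,u_1)\in H^1\times L^2\subset Y$. Combining the Duhamel representation, the orthogonality bound, and the Morawetz estimate, the Cauchy criterion holds in $Y$, which gives the limit \eqref{eq:definition_scattering_in_X}.
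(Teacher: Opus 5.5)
Your reduction step is sound and is essentially the paper's. Cauchy--Schwarz in $s$ works because for fixed $x$ only an $s$-interval of length $2$ contributes, and there $\langle x\rangle\sim\langle s\rangle$. This is equivalent to the paper's duality argument, and it reduces the theorem to a uniform bound on $\|u(t)\|_{L^\infty([-1,1])}$ plus $\int_0^\infty\int\langle s\rangle^{-1}Ku^4\,dx\,ds<\infty$.

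\textbf{The gap is in the key Morawetz step.} The multiplier $a(x)\partial_x u$ does not give $a'Ku^4$ a favorable sign. Multiplying the momentum law $\partial_tT_{01}=\partial_xT_{11}+\frac14K'u^4$ by $a$ and integrating over $[0,T]\times\R$ gives
\[
\int_0^T\!\!\int \Bigl[\tfrac12a'(u_t^2+u_x^2)+\tfrac14|aK'|u^4\Bigr]dx\,dt=\tfrac14\int_0^T\!\!\int a'Ku^4\,dx\,dt-\Bigl[\int aT_{01}\,dx\Bigr]_0^T,
\]
because $T_{11}=\frac12u_t^2+\frac12u_x^2-\frac14Ku^4$. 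Your own bookkeeping already signals this: $(aK)'=a'K+aK'$ cannot contribute $+a'K$ and $-aK'$ at the same time. So the identity bounds the kinetic terms by the potential term, not conversely, and gives nothing on $\int\!\int Ku^4$.

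Adding the Lagrangian correction $\frac12a'u$ does make both quartic terms favorable. However, it produces a bulk term $\frac14\int\!\int a'''u^2$ of indefinite sign (for instance with $a=\tanh x$ or $a=x/\langle x\rangle$). Energy plus $L^\infty$ bounds only control that term by $O(T)$, so the argument does not close as proposed.

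\textbf{What the paper does instead.} It uses a time-dependent Morawetz identity (Proposition~\ref{PROP:Morawetz}) with $a=t+1-x$ and $b=t+1+x$. It works with the weighted quantities $\frac{a^2}{(t+1)^2}(L_-u)^2$, $\frac{b^2}{(t+1)^2}(L_+u)^2$ and $Q=\Box(u^2)$, integrated over the cone $|x|\le t+1$. This yields only the $\frac{1}{1+t}$-weighted bound of Corollary~\ref{rem:mor}, which is exactly what your reduction needs.

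Closing that argument requires a uniform $L^\infty_{t,x}$ bound on the whole cone, to handle the bulk term $u^2\,\Box\bigl(\frac{ab}{(t+1)^3}\bigr)$ and the boundary terms on $t=T$. This is where compact support of the data genuinely enters: outside $[-1,1]$ the solution is transported along characteristics (Proposition~\ref{global bound}).

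\textbf{A second, smaller error.} Your claim that $u$ stays bounded in $H^1$ by energy conservation is false. The energy controls $\dot H^1\times L^2$ and $\int Ku^4$, but not $\|u(t)\|_{L^2}$. Already for the linear flow, $\|u_L(t)\|_{L^2}\gtrsim t^{1/2}$ when $\int u_1\neq0$. Also, finite energy does not require compact support, so that is not its role. For your reduction alone, a bound on $\supp K$ suffices, and that does follow from energy and $K(0)>0$, as in the first step of the proof of Proposition~\ref{global bound}.
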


Let us remark that while Theorem~\ref{T1} asserts that solutions exhibit asymptotically linear behavior (as measured in the $Y$-norm), the nonlinearity does have a measurable effect as $t\to\infty$.  In particular, we will show in Appendix~\ref{S:Appendix} that any solution with initial data in $X$ exhibits pointwise decay on any bounded spatial interval as $t\to\infty$.  This is in contrast to the linear equation,  for which soluti	ons fail to decay locally whenever the initial velocity has nonzero total integral.  In particular, the use of the weighted space $Y$ seems to be natural in the $1d$ setting, as it accommodates such non-decaying linear behavior.

Using Theorem~\ref{T1}, given any admissible $K$ we can define the \emph{scattering map} 
\[
\Phi:X\to Y,\qtq{given by}\Phi((u_0,u_1))=(U_+,V_+).
\]
The scattering map encodes information about the coefficient $K$ through the nonlinear interaction over time. In fact, our second main result shows that the scattering map defined via Theorem~\ref{T1} uniquely determines the coefficient $K$. 

\begin{theorem}\label{T2} Let $K_1,K_2$ be admissible, and let $\Phi_1,\Phi_2:X\to Y$ be the corresponding scattering maps. Then
\begin{equation}\label{stability}
\|K_2 - K_1\|_{L^\infty} \lesssim \biggl[\sup_{(u_0,u_1)\in X\backslash\{0\}} \frac{\|\Phi_2((u_0,u_1))-\Phi_1((u_0,u_1))\|_Y}{\|(u_0,u_1)\|_{\dot H^1\times L^2}}\biggr]^{\frac{1}{8}}, 
\end{equation}
with implicit constants depending on $K_1$ and $K_2$. In particular, if $\Phi_1\equiv\Phi_2$ then $K_1\equiv K_2$.
\end{theorem}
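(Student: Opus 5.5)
We plan to use small-data expansion of the scattering map together with a Born-type approximation. Take data $\varepsilon(f,g)$ with $(f,g)\in X$ fixed and $\varepsilon\to0$. By Duhamel's formula and the limit in Theorem~\ref{T1},
\[
\Phi_j(\varepsilon(f,g)) = \varepsilon(f,g) - \int_0^\infty S(-s)\bigl(0,K_j u_j^3(s)\bigr)\,ds ,
\]
with the sign fixed by $\Box=-\partial_t^2+\partial_x^2$ and the convention in \eqref{Group}. Subtracting the two identities cancels the linear parts. The difference of the maps is then the difference of two Duhamel integrals, and we split it into two pieces:
\begin{itemize}
\item a leading cubic term $\int_0^\infty S(-s)(0,(K_2-K_1)\,(\varepsilon v)^3)\,ds$, where $v=S(s)(f,g)$ is the free evolution;
\item a remainder of size $O(\varepsilon^5)$, coming from $u_j-\varepsilon v=O(\varepsilon^3)$.
\end{itemize}

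\textbf{Step 1: control of the remainder.} The remainder bound must hold in a norm adapted to $Y$, uniformly in time. This is the main obstacle. A free wave with compactly supported data does not decay locally in time when $\int g\neq0$, so $\int_0^\infty$ of a cubic localized term need not converge. To avoid this, we choose data with $\int g=0$, or simply $g\equiv 0$. Then $v(s,\cdot)$ vanishes on $[-1,1]$ once $s$ exceeds the size of the support. The time integral therefore runs over a bounded interval $[0,T]$, with $T$ depending on the data's support. On this interval, energy estimates from Proposition~\ref{P:GWP} and Gronwall give $\|u_j-\varepsilon v\|_{L^\infty_{t\le T}(H^1)}\lesssim \varepsilon^3$. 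Some care is still needed for $s>T$, where $u_j$ itself need not be compactly supported. We handle this using finite speed of propagation together with the local decay from the Appendix, or else by bounding the contribution for $s>T$ via the difference estimate as well. The net result is that the nonlinear remainder contributes $O(\varepsilon^5 C(f,g))$ in $Y$.

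\textbf{Step 2: extracting pointwise information on $K_2-K_1$.} For the leading term, we pair the $Y$-difference against a suitable test function, or evaluate its weighted norm from below. The aim is to isolate the quantity
\[
I=\int_0^T\!\!\int (K_2-K_1)(x)\,v(s,x)^3\,w(s,x)\,dx\,ds
\]
for a free wave $w$. Boundedness of $S(\pm s)$ on $Y$ restricted to compact sets (losing $\langle T\rangle$ factors) gives $|I|\lesssim \|\Phi_2-\Phi_1\|_Y$ up to constants. We then choose $f$ to be a bump of width $\delta$ centered at $x_0\pm s_0$, so that $v$ concentrates near the point $x_0$ at a time $s_0$. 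This yields
\[
I\approx (K_2-K_1)(x_0)\,c\,\delta^{a}+O(\|K'\|_\infty\,\delta^{a+1}).
\]

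\textbf{Step 3: optimization.} Denote by $\eta$ the supremum on the right of \eqref{stability}. Dividing by the scaling, the estimate takes the form
\[
|(K_2-K_1)(x_0)|\lesssim \delta+\delta^{-b}\bigl(\varepsilon^{-2}\eta+\varepsilon^2\bigr).
\]
We choose $\varepsilon^2=\eta^{1/2}$ and then optimize in $\delta$; the bookkeeping of the powers $a,b$ should produce the exponent $1/8$. The constants depend on $\|K_j\|_{C^1}$, which explains the dependence of the implicit constants on $K_1,K_2$. Injectivity follows by letting $\eta=0$. The delicate points are the remainder bound in Step~1 (the non-decay issue) and making the lower bound in Step~2 honest in the weak $Y$-norm.
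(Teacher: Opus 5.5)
\textbf{Step~2 has a genuine gap.} This is the step where all the information about $K_2-K_1$ is supposed to be extracted. In one space dimension free waves are rigidly transported and never concentrate. With $g\equiv0$, each bump of $f$ splits into two half-height bumps moving in opposite directions, and two bumps ``meeting'' at $(s_0,x_0)$ simply superpose.

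Moreover, any free wave $w$ that is nonzero near $(s_0,x_0)$ has a right- or left-moving component that travels along with one of the two meeting bumps. So $\int\!\!\int (K_2-K_1)v^3w$ necessarily contains a weighted line integral of $K_2-K_1$ along a whole characteristic. That co-moving contribution is generically larger, by a factor $\delta^{-1}$, than the crossing contribution that carries $(K_2-K_1)(x_0)$. Compare, in the paper, \eqref{pft2-main}, which is $O(1)$, with \eqref{pft2-1}, which is $O(n^{-1})$.

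Hence the expansion $I\approx (K_2-K_1)(x_0)c\delta^a+O(\delta^{a+1})$ is false for your design. You also never work out $a$, $b$, or the normalization $\|(f,g)\|_{\dot H^1\times L^2}\sim\delta^{-1/2}$, so the exponent $1/8$ is asserted rather than derived.

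The paper makes the co-moving term the main term. It uses the one-directional wave $u_{L,n}=n^{-1}\varphi(n(x-x_0-t))$ (so $g=-f'$, not $g\equiv0$) and pairs the $V_+$-component with $n^4\varphi(n(x-x_0))$. This recovers $\int_{x_0}^\infty(K_1-K_2)$ up to $O(n^{-1})$, while the scattering difference contributes $\lesssim n^3\eta$. Choosing $n\sim\eta^{-1/4}$ bounds the antiderivatives by $\eta^{1/4}$ on $[-2,2]$. The Landau--Kolmogorov inequality with $\|K_1'-K_2'\|_{L^\infty}$ then gives $\eta^{1/8}$. This interpolation step is where the exponent comes from, and it is missing from your plan. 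A pointwise route could work with a single one-directional $v$ and a test wave that crosses it but never co-moves with it, but that is a different construction from the one you describe.

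\textbf{Step~1 targets a non-issue, with a tool that cannot deliver.} Once you pair with a compactly supported test function, only $V_j$ on a bounded set, say $|x|\le3$, matters. By $V_j(x)=v_1(x)-\tfrac12\int_0^\infty[K_j(x+s)u_j^3(s,x+s)+K_j(x-s)u_j^3(s,x-s)]\,ds$ and $\supp K_j\subset[-1,1]$, this involves only $s\in[0,4]$. So the short-time bound of Proposition~\ref{P:ST-small} suffices, and you need no control of the remainder in the full $Y$-norm or at large times. Your proposed treatment of $s>T$ would not give $O(\varepsilon^5)$ anyway: Theorem~\ref{THM:pwdecay} is purely qualitative and provides no rate.
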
 

We have chosen the nonlinearity to be cubic and the inhomogeneity to be supported in the interval $[-1,1]$ for the sake of simplicity.  Our arguments would apply equally well to power-type nonlinearities of the form $|u|^p u$ and with inhomogeneities compactly supported in any bounded interval.  Some decay assumption on $K$ certainly seems to be necessary (cf. the discussion of \eqref{NONLIN} with $K\equiv 1$ above), but it is likely that the compact support assumption could be relaxed.  This could be an interesting direction for follow-up work.  It would also be of interest to optimize spaces $X$ and $Y$ used to obtain scattering in Theorem~\ref{T1}.

As mentioned above, our interest in Theorem~\ref{T1} stems from the related works \cite{Tao, Wei}.  Theorem~\ref{T2} then fits in the context of recent work on the problem of the recovering unknown nonlinearities from the scattering behavior of solutions (see e.g. \cite{Gunther, ChenMurphy1, HKV, KMV1, Murphy, Sun, Watanabe, Xie, Alexakis, Sasaki}).  A particularly interesting feature of the model \eqref{NONLIN} is the complete lack of dispersion for the underlying linear model.  Nonetheless, we can establish a Morawetz estimate for \eqref{NONLIN} that yields scattering, and we can further show that strategies related to nonlinear recovery such as the Born approximation may be brought to bear in our setting. 

In the rest of the introduction, we will briefly outline the strategy of the proofs of Theorems~\ref{T1}~and~\ref{T2}. 

For Theorem~\ref{T1}, the essential ingredient is a Morawetz estimate modeled after the one appearing in \cite{Wei}.  In that work, the authors treated the $1d$ NLW with a defocusing power-type nonlinearity and proved a global space-time estimate for solutions with initial data belonging to a weighted space.  Adapting their arguments in the presence of a nonnegative repulsive inhomogeneity leads to the estimate
\begin{equation}\label{intro:Mor}
\int_0^\infty \int_\R \tfrac{K(x)}{1+t}[u(t,x)]^4\,dx\,dt \lesssim 1. 
\end{equation}
We note that to prove this estimate requires $L_{t,x}^\infty$ control over the solution.  In the setting of \cite{Wei}, one can obtain such control via conservation of energy and the Gagliardo--Nirenberg inequality.  In our setting, conservation of energy only yields $L_x^\infty$-bounds locally in space.  To extend these bounds to all of $\R$, we work with compactly supported initial data and leverage the compact support of $K$ to observe that outside of a compact interval the values of solutions are simply transported along characteristic lines for all time. For more details, see Proposition~\ref{global bound} (for the uniform estimates) and Proposition~\ref{PROP:Morawetz} (for the Morawetz estimate). 

With an estimate of the form \eqref{intro:Mor} in place, we are in a position to prove the scattering result, Theorem~\ref{T1}.  The idea is straightforward: after pulling back by the linear group $S(t)$, we need only show that the remaining integrals in the Duhamel formula converge in $Y$ as $t\to\infty$.  The Morawetz estimate provides sufficient control to establish such convergence. For the details, see Section~\ref{S:proof}. 

Let us finally discuss the proof of the Theorem~\ref{T2}.  As in many recent works on nonlinear recovery via the scattering map, the proof utilizes the Born approximation (see e.g. \cite{ChenMurphy1, HKV, KMV1, Murphy, Sun, Xie}).  This entails first using the Duhamel formula to obtain an implicit formula for the scattering map and then approximating the full nonlinear solution by its first Picard iterate (i.e. the linear solution with the same initial conditions). This leads to the following expansion for the second component $V_+$ of the scattering map:
\begin{equation}\label{Vexpand}
\begin{aligned}
V_+(x) - u_1(x) &= - \tfrac12\int_0^\infty K(x+s)u_L^3(s,x+s) \,ds \\
& \quad -\tfrac12\int_0^\infty K(x-s)u_L^3(s,x-s) \,ds \\
& \quad + \text{higher order terms}, 
\end{aligned}
\end{equation}
where $(u_0,u_1)$ are the initial conditions and $u_L$ is the linear solution with the same data.  In the formula above, the `higher order terms' are those resulting from replacing $u^3$ with $u_L^3$, which we expect to be of size
\[
|u^3 - u_L^3| = \mathcal{O}(|u|^2|u-u_L|) = \mathcal{O}(|u|^5).
\]
In particular, we expect the Born approximation to be most effective in the small-data regime.  In our setting, we make use of small right-traveling linear waves of the form
\[
u_{L,n}(t,x) = n^{-1}\varphi(n(x-x_0-t))
\]
for $\varphi\in C_c^\infty$. We then take the inner product of \eqref{Vexpand} with $n^4\varphi(n(x-x_0))$.  We will show that the quantity arising from the first term on the right-hand side may be used to recover the antiderivative $\int_{x_0}^\infty K(y)\,dy$, while the remaining terms may be treated perturbatively. We note this argument requires an improved $L_x^\infty$ estimate for short times, which we establish in Proposition~\ref{P:ST-small}.  Continuing from above, we derive that if two inhomogeneities $K_1$ and $K_2$ produce the same scattering map, they must agree identically.  In fact, as the argument can be made quantitative, we can obtain the stability estimate asserted in Theorem~\ref{T2}.  We remark that this approach of obtaining a stability estimate from bounds on antiderivatives of the inhomogeneities is reminiscent of the strategy introduced recently in the work \cite{Xie}.  For the details, see Section~\ref{S:proof}. 

The rest of this paper is organized as follows: Section~\ref{S:prelim} contains some preliminary material. Section~\ref{S:GWP} contains a proof of global well-posedness for \eqref{NONLIN}.  Section~\ref{S:bounds} contains the proof of global $L_{t,x}^\infty$ estimates for solutions to \eqref{NONLIN}.  Section~\ref{S:Morawetz} then contains the proof of the Morawetz estimate.  Section~\ref{S:proof} finally contains the proofs of the main results, Theorems~\ref{T1}~and~\ref{T2}.  In Appendix~\ref{S:Appendix}, we provide a proof of local $L_x^\infty$ decay (see Theorem~\ref{THM:pwdecay}). 

\subsection*{Acknowledgements} Much of this work was carried out while L.C. and R.S. were visiting scholars 
at the University of Oregon, with financial support from CNPq (Conselho Nacional de Desenvolvimento Cient\'{i}fico e Tecnol\'{o}gico). L. C. was partially supported by the CNPq grants 07733/2023-8 and 404800/2024-6, and by the FAPEMIG (Funda\c{c}\~{a}o de Amparo \`{a} Pesquisa do Estado de Minas Gerais) grant APQ-03186-24. J. M. was supported by Simons Foundation grant MPS-TSM-00006622. R.S. was supported by the CNPq grant 201049/2025-2 and by FAPEMIG.

\section{Preliminaries}\label{S:prelim}

We write $A\lesssim B$ to indicate $A\leq CB$ for some $C>0$.  We use the standard notations for Lebesgue and Sobolev spaces, and we use the standard Japanese bracket notation: $\langle x\rangle=\sqrt{1+x^2}$. 
% \subsection{Linear wave equation 1D}

The solution to the $1d$ linear wave equation
\begin{align}\label{LIN}
\begin{cases}
\Box u = 0 \\
(u, \partial_t u){|_{t=0}} =(u_0,u_1) \in \Dot{H}^1(\R) \times L^2(\R).  
\end{cases}
\end{align}
is given by the well-known d'Alembert formula
\begin{equation}\label{lineard}
u_L(t,x) = \tfrac{1}{2}(u_0(x+t) + u_0(x-t)) + \tfrac{1}{2}\int_{x-t}^{x+t} u_1(s)\,ds.
\end{equation}
We may equivalently write
\[
(u_L(t),\partial_t u_L(t))=S(t)(u_0,u_1),
\]
where $S(t)$ is the unitary operator on $\dot H^1\times L^2$ given by
\begin{equation}\label{Group}
S(t) = \left(\begin{array}{cc} \cos(t|\nabla|) & |\nabla|^{-1}\sin(t|\nabla|)\\ -|\nabla|\sin(t|\nabla|) & \cos(t|\nabla|) \end{array}\right).
\end{equation}

The solution to \eqref{NONLIN} is formally given by the Duhamel formula
\[
u(t) = \cos(t |\nabla|)u_0 + |\nabla|^{-1}\sin(t |\nabla|) u_1 + \int_0^t |\nabla|^{-1}\sin((t-s)|\nabla|)K[u(s)]^3\,ds,
\]
which can also be written as the d'Alembert formula 
\begin{equation}\label{Duhamel}
u(t,x) = u_L(t,x)- \tfrac{1}{2}\int_0^t \int_{x-(t-s)}^{x+(t-s)} K(y) u^3(s,y)\,dy\,ds,
\end{equation}
where $u_L$ is as in \eqref{lineard}.

We define the energy density, momentum density, and momentum current, respectively, as:
\begin{align}
\begin{cases}
T_{00} = \frac{1}{2}(\partial_x u)^2 + \frac{1}{2}(\partial_t u)^2 + \frac{1}{4}Ku^4 \\
T_{01} = \partial_x u\,\partial_t u \\
T_{11} = \frac{1}{2}(\partial_x u)^2 + \frac{1}{2}(\partial_t u)^2 -\frac{1}{4}Ku^4.
\end{cases}
\end{align}
Solutions to the equation \eqref{NONLIN} then enjoy the following local conservation laws:
\begin{align}\label{ConsLaws}
\begin{cases}
\partial_t T_{00} = \partial_xT_{01} \\
\partial_tT_{01}= \partial_xT_{11}+  \frac{1}{4}K'u^4,
\end{cases}
\end{align}
which in particular implies the conservation of the energy defined by
\[
E(u,\partial_t u) = \tfrac{1}{2} \int |\partial_x u(t,x)|^2\, dx + \tfrac{1}{2} \int |\partial_t u(t,x)|^2\, dx + \tfrac{1}{4} \int K(x)[u(t,x)]^{4}\,dx.
\]

Moreover, defining 
\begin{equation}\label{Lpm}
L_+ = \partial_t + \partial_x \quad and \quad L_- = \partial_t - \partial_x,
\end{equation}
we can readily deduce the following relations for solutions to \eqref{NONLIN}:
\begin{equation}
\begin{cases}
T_{00} -T_{11} = \frac12 Ku^4\\
T_{00}-T_{01}= \frac{1}{2}(L_-u)^2 +\frac{1}{4}Ku^4\\
T_{00}+T_{01}= \frac{1}{2}(L_+u)^2 +\frac{1}{4}Ku^4.
\end{cases}
\end{equation}

We will also make use of the conservation of energy adapted to the bounded region between $t=0$ and $t=T$ on the outgoing null line $x=t+R$ (see \cite{Wei}):
\begin{align}\label{EQ:adaptedconslaw}
\int_0^T \tfrac{1}{2}(L_+u)^2 + \tfrac{1}{4}K u^4 \big|_{x=t+R} \,dt = \int_{t+R}^\infty T_{00}(t,x)\,dx  \big|_{t=T}^{t=0} \lesssim E(u(t))
\end{align}
Similarly, we have the analogous bound for the ingoing null line $x= -t -R$:
\begin{align}\label{EQ:adaptedconslaw2}
\int_0^T \tfrac{1}{2}(L_-u)^2 + \tfrac{1}{4}K u^4 \big|_{-x=t+R} \,dt = \int^{-t-R}_{-\infty} T_{00}(t,x)\,dx  \big|_{t=T}^{t=0} \lesssim E(u(t)).
\end{align}

%%%%
\section{Global well-posedness}\label{S:GWP}

Global well-posedness in $H^1\times L^2$ for \eqref{NONLIN} with $K\in L^\infty$ follows from standard energy arguments.  We include a proof for the sake of completeness.

To simplify the formulas below, we introduce the notation
\begin{equation}\label{UW}
U(t) = \cos(t |\nabla|)\qtq{and} W(t) =|\nabla|^{-1}\sin(t |\nabla|).
\end{equation}

\begin{proposition}[Global well-posedness]\label{P:GWP} Let $K\in L^\infty$.  Then \eqref{NONLIN} is globally well-posed for initial data $(u_0,u_1)\in H^1\times L^2$. 
\end{proposition}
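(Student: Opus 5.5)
The plan is the standard two-step argument: a contraction-mapping local theory with a blow-up alternative, followed by an a priori bound on the $H^1\times L^2$ norm that prevents blow-up. For the local step I would write \eqref{NONLIN} in Duhamel form using the notation \eqref{UW},
\[
\Gamma u(t) = U(t)u_0 + W(t)u_1 + \int_0^t W(t-s)\bigl(K[u(s)]^3\bigr)\,ds,
\]
and run a contraction in $C([0,T];H^1)\cap C^1([0,T];L^2)$ on a ball of radius $2\|(u_0,u_1)\|_{H^1\times L^2}$. The relevant linear facts are:
\begin{itemize}
\item $U(t)$ is bounded on $H^1$ uniformly in $t$;
\item $W(t)$ maps $L^2$ into $\dot H^1$ with norm $\le 1$;
\item $\|W(t)f\|_{L^2}\le |t|\,\|f\|_{L^2}$, which is immediate from \eqref{lineard}.
\end{itemize}
The nonlinearity is controlled through the $1d$ embedding $H^1\hookrightarrow L^\infty$:
\[
\|K u^3 - K v^3\|_{L^2}\le \|K\|_{L^\infty}\bigl(\|u\|_{L^\infty}^2+\|v\|_{L^\infty}^2\bigr)\|u-v\|_{L^2}\lesssim \|K\|_{L^\infty}\bigl(\|u\|_{H^1}^2+\|v\|_{H^1}^2\bigr)\|u-v\|_{H^1}.
\]
This makes $\Gamma$ a contraction for $T\sim \bigl(\langle \|(u_0,u_1)\|_{H^1\times L^2}\rangle^{2}\|K\|_{L^\infty}\bigr)^{-1}$. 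The same estimates give uniqueness, continuous dependence on the data, and the blow-up alternative: if the maximal time $T^*$ is finite, then $\|(u,\partial_t u)(t)\|_{H^1\times L^2}\to\infty$ as $t\to T^*$.

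For globality I would first justify conservation of $E$ by mollifying the data, using \eqref{ConsLaws} for the smooth solutions, and passing to the limit with the continuous dependence just established. The aim is then a bound on $\|u(t)\|_{H^1}+\|\partial_t u(t)\|_{L^2}$ that stays finite on every bounded time interval. For the $\dot H^1\times L^2$ part I would use conservation of $E$. For the $L^2$ part of $u$ I would use $\frac{d}{dt}\|u\|_{L^2}\le \|\partial_t u\|_{L^2}$, which gives at most linear growth in $t$ once $\|\partial_t u\|_{L^2}$ is bounded.

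The main obstacle is converting conservation of $E$ into a bound on the kinetic part $\frac12\|\partial_x u\|_{L^2}^2+\frac12\|\partial_t u\|_{L^2}^2$. This is exactly where the sign of $K$ enters. On the set where $K\ge 0$, the potential term $\frac14\int K u^4$ is nonnegative and can simply be dropped. On the set where $K<0$ the potential term has no sign, and the only available estimate is
\[
\int K^- u^4\le \|K\|_{L^\infty}\|u\|_{L^2}^3\|\partial_x u\|_{L^2}
\]
by Gagliardo--Nirenberg. This bound is superlinear in the energy and does not close a Gronwall argument. For arbitrary sign-changing $K\in L^\infty$ I would therefore not expect the scheme to close. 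Indeed, taking $K\equiv -1$ on a large interval gives the focusing $1d$ cubic wave equation there, and there ODE-type blow-up of large data seems possible through finite speed of propagation.

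So I expect this step to force a sign hypothesis, namely $K\ge 0$, which every admissible $K$ in the sense of Definition~\ref{D:admissible} satisfies. Under that hypothesis the energy bounds $\|\partial_x u\|_{L^2}^2+\|\partial_t u\|_{L^2}^2\le 2E$, the $L^2$ growth bound closes, and the blow-up alternative yields global existence. For genuinely sign-changing $K$ I would at most claim the local theory, together with global existence for small data, where the quartic term is dominated by the quadratic part of the energy.
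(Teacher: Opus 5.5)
Your local and global steps match the paper's proof. Both run a contraction for the Duhamel map in $C_tH^1\cap C_t^1L^2$ using the Plancherel bounds on $U(t)$, $W(t)$ and $H^1\hookrightarrow L^\infty$. Both then use conservation of energy for the $\dot H^1\times L^2$ part and the linear-in-$t$ bound on $\|u(t)\|_{L^2}$ to iterate.

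Your sign caveat is correct, and it corrects the statement itself. The paper's sentence ``the $\dot H^1\times L^2$-norm of any solution remains bounded due to conservation of energy'' silently uses $K\ge 0$, and for general $K\in L^\infty$ the proposition is false. Take $K=-1$ on $[-T,T]$ and $(u_0,u_1)=(\sqrt2/T,\sqrt2/T^2)$ on $[-T,T]$, cut off outside. Finite speed of propagation then forces $u=\sqrt2/(T-t)$ on $\{|x|\le T-t\}$, so the solution cannot be continued to time $T$. The proposition should therefore be stated for $0\le K\in L^\infty$. This covers every admissible $K$ and is all the paper uses.

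Two of your side remarks need adjusting.

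First, drop the claimed small-data global existence for sign-changing $K$. The quadratic part of $E$ is only the $\dot H^1\times L^2$ norm, which in $1d$ does not control $\int u^4$. The example above has $\|(u_0,u_1)\|_{H^1\times L^2}\sim T^{-1/2}$. So if $K=-1$ on a half-line (translate the construction into it), arbitrarily small data blow up.

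Second, for merely bounded $K$, mollifying only the data does not give solutions regular enough to use \eqref{ConsLaws}. Either mollify $K$ as well and pass to the limit with the same Duhamel estimates, or check $\tfrac{d}{dt}E=0$ directly for solutions in $C_tH^1\cap C_t^1L^2$.
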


\begin{proof} Let $M=\|(u_0,u_1)\|_{H^1\times L^2}$ and let $T=T(K,M)>0$ be a small parameter to be determined below. We define the complete metric space
\[
B = \{u\in C([0,T];H^1)\cap C^1([0,T];L^2): \|u\|_{L_t^\infty H_x^1} + \|\partial_t u\|_{L_t^\infty L_x^2} \leq 4CM\}
\]
equipped with the distance
\[
d(u,v) = \|u-v\|_{L_t^\infty H_x^1}+\|\partial_t u - \partial_t v\|_{L_t^\infty L_x^2}.
\]
Here $C>0$ is a universal constant and all norms are taken over $[0,T]\times\R$. 

We now define the Duhamel operator
\[
\Phi(u) = U(t)u_0 + W(t) u_1 + \int_0^t W(t-s)K\,[u(s)]^3\,ds. 
\]
We wish to prove that for $T$ chosen sufficiently small, $\Phi$ is a contraction on $B$.

To begin, we let $u\in B$. Estimating the norms of operators $U(t)$ and $W(t)$ via the Plancherel Theorem, we first obtain
\begin{align*}
\|\Phi(u(t))\|_{H^1} & \leq \|u_0\|_{H^1} + (1+T)\|u_1\|_{L^2} + T(1+T)\|K u^3\|_{L_t^\infty L_x^2} \\
& \leq M + (1+T)M + T(1+T)\|K\|_{L^\infty}\|u\|_{L_t^\infty L_x^2} \|u\|_{L_{t,x}^\infty}^2 \\
& \leq M + (1+T)M + CT(1+T)\|K\|_{L^\infty} M^3
\end{align*}
uniformly in $t\in[0,T]$.  Thus for $T=T(K,M)$ sufficiently small, we have
\[
\|\Phi u\|_{L_t^\infty H_x^1} \leq 2CM.
\]
Similarly,
\begin{align*}
\|\partial_t \Phi(u(t))\|_{L^2} &\leq \|u_0\|_{\dot H^1} + \|u_1\|_{L^2} + T\|Ku^3\|_{L_t^\infty L_x^2} \\
&\leq 2M + CT\|K\|_{L^{\infty}} M^3 
\end{align*}
uniformly in $t\in[0,T]$. 

Choosing $T=T(K,M)$ sufficiently small, we obtain
\[
\|\Phi(u)\|_{L_t^\infty H_x^1} + \|\partial_t \Phi(u)\|_{L_t^\infty L_x^2} \leq 4CM,
\]
so that $\Phi:B\to B$. 

Given $u,v\in B$, similar estimates yield
\begin{align*}
\|\Phi(u)-\Phi(v)\|_{L_t^\infty H_x^1} & \lesssim T(1+T) \|K\|_{L^\infty}\{\|u\|_{L_{t,x}^\infty}^2+\|v\|_{L_{t,x}^\infty}^2\}\|u-v\|_{L_t^\infty L_x^2} \\
& \lesssim T(1+T)\|K\|_{L^\infty} M^2 \|u-v\|_{L_t^\infty L_x^2}
\end{align*}
and
\begin{align*}
\|\partial_t[\Phi(u)-\Phi(v)]\|_{L_t^\infty L_x^2} \lesssim T\|K\|_{L^\infty} M^2\|u-v\|_{L_t^\infty L_x^2}.
\end{align*}
Thus for $T=T(K,M)$ sufficiently small, we obtain
\[
d(\Phi(u),\Phi(v))\leq\tfrac12 d(u,v).
\]  

As $\Phi:B\to B$ is a contraction, we conclude that there exists unique $u\in B$ satisfying $u=\Phi(u)$, yielding a local solution to \eqref{NONLIN} on $[0,T]$.  We remark that similar estimates can also be used to obtain continuity of the data-to-solution map. 

To pass from local to global well-posedness, we will obtain an \emph{a priori} estimate on the growth of the $H^1\times L^2$ norm.  In fact, the $\dot H^1\times L^2$-norm of any solution $(u,u_t)$ remains bounded due to conservation of energy. For the $L^2$-norm of a solution $u$ with initial data $(u_0,u_1)$, we compute
\[
\tfrac{d}{dt} \|u(t)\|_{L^2}^2 = 2\int u(t,x)\partial_t u(t,x)\,dx \leq 2\|u_t\|_{L_t^\infty L_x^2} \|u(t)\|_{L_x^2},
\]
which yields the \emph{a priori} estimate
\[
\|u(t)\|_{L_x^2} \leq \|u_0\|_{L^2} + Ct\|u_1\|_{L^2}.
\]
We can therefore iterate the local-in-time result to obtain global existence. \end{proof}

\section{Uniform estimates}\label{S:bounds}

%{\texorpdfstring{$L^\infty$}{L\textasciicircum∞} bounds}

The local theory developed in the previous section ensures that the solution $u$ to \eqref{NONLIN} belongs to $C_t H_x^1$, but does not provide uniform control on the $L_x^2$-norm.  In particular, we have $u\in C_t L_x^\infty$ by Sobolev embedding, but the estimates appearing in the local theory only provide the estimate $\|u(t)\|_{L_x^\infty} \lesssim \langle t\rangle^{\frac12}$.

In this section, we firstly prove that we can establish uniform $L_{t,x}^\infty$ bounds for solutions with compactly supported initial data.  We will also establish an improved short-time estimate for small data. 

We note that in \cite{Tao}, the authors use the conservation of energy combined with the Gagliardo--Nirenberg inequality to control the $L_x^\infty$-norm for solutions with general data in $H^1\times L^2$.  This approach fails in our setting due to the fact that the nonlinearity is localized to the unit interval. Our approach will be to first establish $L_{t,x}^\infty$-bounds for $x\in[-R,R]$ and then to use the support assumptions on the initial data to extend these bounds to all $x\in\R$. 

\begin{proposition}\label{global bound} Let $R>0$ and suppose $(u_0,u_1)\in H^1\times L^2$ satisfies $\supp(u_0,u_1)\subset[-R,R]$ and $E((u_0,u_1))=E_0$. Let $K$ be admissible in the sense of Definition~\ref{D:admissible}, and let $u$ denote the global solution to \eqref{NONLIN} with initial data $(u_0,u_1)$.  Then
\begin{equation}\label{Linftybound}
\|u\|_{L_{t,x}^\infty([0,\infty)\times\R)} \lesssim_{K,E_0,R} 1. 
\end{equation}
\end{proposition}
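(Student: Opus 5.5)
Since the nonlinearity is supported in $[-1,1]$, I will split $\R$ into a bounded region where energy controls $u$, and an exterior where $u$ solves the free wave equation and is determined by transport along characteristics.

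\textbf{Step 1: interior bound.} I set $\rho=\max(R,1)$ and fix $t\ge0$. For $x\in[-\rho-1,\rho+1]$ I write $u(t,x)=u(t,0)+\int_0^x\partial_x u(t,y)\,dy$. The integral is at most $(\rho+1)^{1/2}(2E_0)^{1/2}$ by Cauchy--Schwarz and conservation of energy. It therefore suffices to bound $|u(t,0)|$ uniformly in $t$. Since $K(0)>0$ and $K$ is continuous, there is $\delta>0$ with $K\ge c>0$ on $[-\delta,\delta]$. Energy then gives $\int_{-\delta}^{\delta}u^4(t,x)\,dx\le 4E_0/c$. Hence some $y\in[-\delta,\delta]$ satisfies $|u(t,y)|\le (2E_0/(c\delta))^{1/4}$. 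Moving from $y$ to $0$ costs at most $\delta^{1/2}(2E_0)^{1/2}$, again by Cauchy--Schwarz with the energy. So $|u(t,x)|\lesssim_{K,E_0,R}1$ on $[0,\infty)\times[-\rho-1,\rho+1]$. This is exactly where local energy, rather than Gagliardo--Nirenberg, gives $L^\infty$ control.

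\textbf{Step 2: exterior by d'Alembert.} Take $x>\rho+1$; the case $x<-\rho-1$ is symmetric. On $\{x>1\}$ we have $\Box u=0$, so $u=f(x+t)+g(x-t)$ there, with $f'$ and $g'$ computed from $L_\mp u$. I would argue via the Duhamel formula \eqref{Duhamel} instead, to avoid regularity issues. The backward characteristic triangle from $(t,x)$ meets $\supp K$ only on $y\in[-1,1]$, at times $s\le t-(x-1)$. If $t\le x-1$, the triangle misses $\supp K$ and $u(t,x)=u_L(t,x)$. This is bounded by $\|u_0\|_{L^\infty}+\tfrac12\|u_1\|_{L^1}\lesssim_R E_0^{1/2}$, using compact support in $[-R,R]$ and Sobolev/Cauchy--Schwarz. (Note $\|u_0\|_{L^\infty}\le (2R)^{1/2}\|u_0'\|_{L^2}$ by the support.)

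If $t>x-1$, I use the characteristic structure directly. Since $x-t<1$ and the region $\{y>1\}$ is free, $u(t,x)=u(t_*,\rho+1)+[\text{left-moving part}]$. More cleanly, I integrate $L_+$ and $L_-$ derivatives. The incoming wave $f(x+t)$ satisfies $f(x+t)$ = data value, since $x+t>\rho$ means the left-moving component comes from outside the initial support. It is therefore essentially determined by data and is bounded as in the previous case. The outgoing component $g(x-t)$ is constant along $x-t=\text{const}$. So $g(x-t)=u(t',\rho+1)-f(\rho+1+t')$ with $t'=t-(x-\rho-1)\ge0$, and this is bounded by Step 1 plus the data bound.

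\textbf{Main obstacle.} I expect the main difficulty to be making the decomposition $u=f(x+t)+g(x-t)$ on the exterior rigorous for $H^1$ solutions. One also has to check that the incoming profile $f$ on $\{x+t>\rho\}$ really comes only from the data. It does, because every left-moving characteristic entering from the right starts at $t=0$ outside the support of $K$, with data vanishing beyond $R$. The incoming part then equals the free value $\tfrac12 u_0(x+t)+\tfrac12\int_{x+t}^\infty u_1=0$ plus a constant fixed by matching. I would handle this by writing $u(t,x)-u(t',\rho+1)$ as the line integral of $L_-u$, which vanishes along the right-moving characteristic except for the $\tfrac12\int u_1$-type constant. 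Alternatively, I would apply \eqref{Duhamel} directly and note that the Duhamel integral at $(t,x)$ equals the one at $(t',\rho+1)$: both triangles intersect $\supp K\times[0,\infty)$ in the same set, since the triangle's left edge is the same null line $y-s=x-t$. This gives $u(t,x)=u(t',\rho+1)-u_L(t',\rho+1)+u_L(t,x)$, which is uniformly bounded by Step 1 and the linear bound.
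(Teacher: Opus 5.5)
Your argument is correct. The interior step is the paper's idea in a slightly more elementary form. Instead of applying Sobolev embedding to $u\chi_K$ and bounding $\|u(t)\|_{L^2(I_K)}$ by Cauchy--Schwarz against $\int Ku^4$, you average $\int_{-\delta}^{\delta}u^4\le 4E_0/c$ to find a good point, then use the fundamental theorem of calculus with the kinetic energy.

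The exterior step is where you genuinely differ. The paper uses the transport identity \eqref{est1} for $w_+=L_+u$, which satisfies $L_-w_+=-Ku^3$, to show $L_+u\equiv0$ on $[0,\infty)\times[R,\infty)$. Hence $u$ is purely outgoing there, with $u(t,x)=u(t-(x-R),R)$ and $u=0$ for $x\ge t+R$. You instead compare the backward cones in \eqref{Duhamel} from $(t,x)$ and from $(t',\rho+1)$, where $t'=t-(x-\rho-1)$. Both cones meet the strip $[0,\infty)\times\supp K$ in the same set $\{|y|\le1,\ s\ge0,\ y-s\ge x-t\}$, so $u(t,x)=u(t',\rho+1)+u_L(t,x)-u_L(t',\rho+1)$. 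Together with $u=u_L$ when $t\le x-1$, this closes the argument.

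What each route buys:
\begin{itemize}
\item Your route stays at the level of the integral equation, so it never differentiates an $H^1$ solution along characteristics. The paper's pointwise use of $w_+\in L^2$ along null lines tacitly needs an approximation argument.
\item Your choice $\rho=\max(R,1)$ makes explicit the reduction to $R>1$ that the paper assumes silently.
\item The paper's route gives sharper structural information: outgoing transport and finite speed of propagation, which are used in Remark~\ref{R:nodecay}. You can recover this too. For $x>R$ the free solution $u_L(t,x)=\tfrac12u_0(x-t)+\tfrac12\int_{x-t}^\infty u_1$ depends only on $x-t$, so your correction term $u_L(t,x)-u_L(t',\rho+1)$ vanishes.
\end{itemize}

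Your preliminary discussion of $f(x+t)$, $g(x-t)$ and ``a constant fixed by matching'' is vague. Replace it with the cone comparison, which is the complete argument.
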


\begin{proof}[Proof of Proposition~\ref{global bound}] Let $u,K$ be as in the statement of the theorem. We will first prove an estimate on $[0,\infty)\times[-R,R]$.

To this end, we first define
\[
 I_K = \{x\in\R: K(x) \geq \tfrac12K(0) \},
\]
which satisfies $|I_K|\sim_K 1$ and (without loss of generality) $I_K\subset[-R,R]$. Let $\chi_K$ denote a smooth cutoff to $I_K$.  Then, by Sobolev embedding and conservation of energy,
\begin{align*}
\|u(t) \chi_K \|_{L^\infty} &\lesssim \|u(t)\chi_K\|_{L^2}^{\frac12}[\|u(t)\partial_x[\chi_K]\|_{L^2}+\|\partial_x u(t) \chi_K\|_{L^2}]^{\frac12} \\
& \lesssim |I_K|^{-\frac12}\|u(t)\|_{L^2(I_K)} + \|u(t)\|_{L^2(I_K)}^{\frac12}\|\partial_x u\|_{L_t^\infty L_x^2}^{\frac12} \\
& \lesssim_K \|u(t)\|_{L^2(I_K)}+E_0^{\frac14} \|u(t)\|_{L^2(I_K)}^{\frac12}.
\end{align*}
To estimate the $L^2$-norm on $I_K$, we use the definition of $I_K$, Cauchy--Schwarz, and the conservation of energy:
\begin{align*}
\int_{I_K} |u(t,x)|^2\,dx &\lesssim_K \int_{I_K} [K(x)]^{\frac12} |u(t,x)|^2\,dx \\
& \lesssim_K |I_K|^{\frac12}\biggl[\int K(x)|u(t,x)|^4\,dx\biggr]^{\frac12}  \lesssim_{K,E_0} 1. 
\end{align*}
Thus we obtain
\[
\|u\|_{L_{t,x}^\infty([0,\infty)\times I_K)} \lesssim_{K,E_0} 1.
\]

We now use the Fundamental Theorem of Calculus, Cauchy--Schwarz, and the fact that $0\in I_K$ to estimate
\begin{align*}
|u(t,x)| &\leq |u(t,0)|+\biggl|\int_0^{x_0}\partial_y u(t,y)\,dy\biggr| \\
& \leq \|u\|_{L_{t,x}^\infty([0,\infty)\times I_K)} + |x|^{\frac12} \|\partial_x u\|_{L_t^\infty L_x^2} \\
& \lesssim_{K,E_0,R} 1
\end{align*}
uniformly in $t\geq 0$ and $x\in[-R,R]$. Thus we obtain 
\begin{equation}\label{local bound2} 
\| u\|_{L_{t,x}^\infty([0,\infty)\times [-R,R])} \lesssim_{K,E_0,R}1.
\end{equation}

We now consider the external region. We first demonstrate how to obtain bounds for 
\[
(t,x)\in\Omega_R:=[0,\infty)\times[R,\infty).
\]
To begin, we recall the notation in \eqref{Lpm} and define
\[
w_+ = L_+ u,\qtq{which satisfies} L_- w_+ = -Ku^3. 
\]
In particular, for any $(t,x)$ we have
\[
\tfrac{d}{ds} w_+ (s, x+t-s) = -K(x+t-s)[u(s,x+t-s)]^3,
\]
so that
\begin{equation}\label{est1}
w_+(t,x) = w_+(0,x+t) - \int_0^t K(x+t-s)[u(s,x+t-s)]^3\,ds.
\end{equation}
Now, if $x\geq R>1$, then (since $K$ is supported in $[-1,1]$) the integral in \eqref{est1} vanishes identically. We therefore obtain 
\begin{equation}\label{MOC}
\partial_t u(t,x) + \partial_x u(t,x) = u_1(x+t)+u_0'(x+t)=0,\quad (t,x)\in \Omega_R. 
\end{equation}

Now if $(t,x)\in\Omega_R$ satisfies $x\geq t+R$, then by \eqref{MOC} and the method of characteristics, we obtain
\[
u(t,x)=u_0(0,x-t) =0.
\]
If instead $(t,x)\in\Omega_R$ satisfies $R\leq x\leq t+R$, then \eqref{MOC} and the method of characteristics imply
\[
u(t,x) = u(t-(x-R),R).
\]
Thus we conclude
\[
\|u\|_{L_{t,x}^\infty(\Omega_R)} \leq \|u\|_{L_{t,x}^\infty([0,\infty)\times[-R,R])}. 
\]

An analogous argument using $w_-:=L_- u$ yields bounds for $(t,x)\in[0,\infty)\times(-\infty,-R]$. Thus we see that \eqref{local bound2} implies \eqref{Linftybound}, which completes the proof of Proposition~\ref{global bound}. \end{proof}

\begin{remark}\label{R:nodecay} The argument above establishes several important properties.  First, we obtain finite speed of propagation: if $(u_0,u_1)$ are supported in $[-R,R]$, then the corresponding solution $u$ vanishes for $|x|>R+|t|$.  Second, we see that solutions do not decay globally in space.  Indeed, nonzero values of the solutions are transported along characteristic lines for all time.  On the other hand, in Appendix~\ref{S:Appendix} we will establish $L_x^\infty$ decay on any bounded spatial interval. \end{remark}

We turn to the second result of this section. 

\begin{proposition}\label{P:ST-small} Fix $T_0>0$ and let $K$ be admissible in the sense of Definition~\ref{D:admissible}. There exists $\eps_0>0$ sufficiently small  that the following holds:  If $(u_0,u_1)\in H^1\times L^2$ are such that the solution $u_L$ to \eqref{LIN} with $(u_L,\partial_t u_L)|_{t=0}=(u_0,u_1)$ satisfies
\[
\|u_L\|_{L_{t,x}^\infty([0,\infty)\times\R)} < \eps<\eps_0,
\]
then the solution $u$ to \eqref{NONLIN} with $(u,\partial_t u)|_{t=0}=(u_0,u_1)$ satisfies
\[
\|u-u_L\|_{L_{t,x}^\infty([0,T_0]\times\R)} \lesssim_K T_0\eps^3.
\]
\end{proposition}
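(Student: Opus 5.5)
We argue by a continuity (bootstrap) argument built on the d'Alembert form of the Duhamel formula \eqref{Duhamel}. Set $w=u-u_L$, so that
\[
w(t,x) = -\tfrac12\int_0^t\int_{x-(t-s)}^{x+(t-s)} K(y)\,u^3(s,y)\,dy\,ds .
\]
The key observation is that the inner integral is taken over a region where $K$ is supported in $[-1,1]$. Hence, for each fixed $s$, it is bounded by $\|K\|_{L^1}\|u(s)\|_{L^\infty}^3 \le 2\|K\|_{L^\infty}\|u(s)\|_{L^\infty}^3$, with no growth from the length $2(t-s)$ of the interval. Integrating in $s$ then gives
\[
|w(t,x)| \le \|K\|_{L^\infty}\int_0^t \|u(s)\|_{L^\infty}^3\,ds \le \|K\|_{L^\infty}\, t\, \|u\|_{L^\infty_{t,x}([0,t]\times\R)}^3 .
\]

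By Proposition~\ref{P:GWP}, $u\in C_tH^1_x\subset C_tL^\infty_x$. Likewise $u_L\in C_tL^\infty_x$, so $t\mapsto \|w\|_{L^\infty_{t,x}([0,t]\times\R)}$ is continuous and vanishes at $t=0$.

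We make the bootstrap hypothesis $\|w\|_{L^\infty([0,t]\times\R)}\le \eps$ on some interval $[0,t]\subset[0,T_0]$. Then $\|u\|_{L^\infty([0,t]\times\R)}\le 2\eps$, and the estimate above yields
\[
\|w\|_{L^\infty([0,t]\times\R)} \le 8\|K\|_{L^\infty}T_0\,\eps^3 .
\]
We now choose $\eps_0$ so small that $8\|K\|_{L^\infty}T_0\eps_0^2\le \tfrac12$. With this choice the improved bound is at most $\eps/2$, which is strictly better than the hypothesis. By continuity the bootstrap closes on all of $[0,T_0]$, and we obtain $\|u-u_L\|_{L^\infty_{t,x}([0,T_0]\times\R)}\lesssim_K T_0\eps^3$ as claimed. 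Note that $\eps_0$ depends on $T_0$ and $K$, which is consistent with the statement, since $T_0$ is fixed first.

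The main point to check is the continuity of the bootstrap quantity. We need $u\in C([0,T_0];L^\infty)$, which follows from $u\in C_tH^1$ via Sobolev embedding. For $u_L$, only $L^\infty_{t,x}$ boundedness is assumed, but $u_L$ is continuous in $(t,x)$ by the d'Alembert formula \eqref{lineard}. To sidestep any subtlety, we can instead run the bootstrap directly on the quantity $\sup_{[0,t]\times\R}|w|$. This is continuous in $t$ because $w$ is given by the Duhamel integral, whose time derivative is bounded by $\|K\|_{L^1}\|u\|_{L^\infty}^3$ on $[0,T_0]$. The remaining steps are routine.
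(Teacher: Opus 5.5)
Your proposal is correct and follows essentially the same argument as the paper: a continuity argument on $[0,T_0]$ built on the d'Alembert–Duhamel formula \eqref{Duhamel}, where the compact support of $K$ gives a bound $\|K\|_{L^1}\|u\|_{L^\infty}^3$ with no growth from the length of the light-cone interval, and $\eps_0$ is chosen so that $T_0\|K\|_{L^1}\eps_0^2$ is small. The only cosmetic difference is that you bootstrap on $\|u-u_L\|_{L^\infty}\le\eps$, whereas the paper bootstraps on $\|u\|_{L^\infty}\le 2\eps$ (via the set $J$); the two are interchangeable here.
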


\begin{proof} Let $T_0,K,u_L,u$ be as in the statement of the theorem, with $\eps_0>0$ to be determined below. Define 
\[
J=\{T\in[0,T_0]:\|u\|_{L_{t,x}^\infty([0,T]\times\R)} \leq 2\eps\}.
\]

We observe that $0\in J$ and that $J$ is closed by continuity of the flow in $L_x^\infty$.  Now, by using the d'Alembert formula \eqref{Duhamel}, we have the following estimate for any $T\in(0,T_0]$:
\[
\|u\|_{L_{t,x}^\infty([0,T]\times\R)} \leq \eps + T\|K\|_{L^1} \|u\|_{L_{t,x}^\infty([0,T]\times\R)}^3. 
\] 
Thus if $T\in J$ and $\eps_0$ is sufficiently small, we have that
\[
\|u\|_{L_{t,x}^\infty([0,T]\times\R)} \leq \eps + [8T_0\|K\|_{L^1}\eps_0^2] \eps\leq \tfrac32\eps.
\]
By continuity of the flow, we find that $J$ contains an open interval around $T$. By connectedness of $[0,T_0]$, we conclude that $J=[0,T_0]$, and using \eqref{Duhamel} as above now yields
\[
\|u-u_L\|_{L_{t,x}^\infty([0,T_0]\times\R)} \leq T_0\|K\|_{L^1} \|u\|_{L_{t,x}^\infty}^3 \lesssim_K T_0 \eps^3,
\] 
as desired.\end{proof}

%%%%%%%%%%%%%%
\section{Morawetz estimate}\label{S:Morawetz}

In this section we will prove a Morawetz estimate inspired by \cite{Wei}.  As a consequence, we will derive Corollary~\ref{rem:mor}, which will be a key ingredient in the proof of Theorem~\ref{T1} in the following section.

 In what follows, we denote 
\[
a = t+1-x\qtq{and} b= t+1+x,
\]
and we set 
\[
Q = -L_+L_- (u^2) = -L_-L_+(u^2)=\square(u^2).
\]

\begin{prop}\label{PROP:Morawetz} Let $(u_0,u_1)\in H^1\times L^2$ with $\supp(u_0,u_1)\subset[-R,R]$ and $E((u_0,u_1))=E_0$.  Let $K$ be admissible in the sense of Definition~\ref{D:admissible}, and let $u$ denote the global solution to \eqref{NONLIN} with initial data $(u_0,u_1)$. Then 
\begin{equation}\label{eq:localized_Morawetz}
\int_0^{\infty} \int_{-t-1}^{t+1} \frac{(t+1)^2-x^2}{(t+1)^3}K(x)[u(t,x)]^4\,dx\,dt \lesssim_{K,E_0,R} 1.
\end{equation}
\end{prop}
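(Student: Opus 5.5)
We prove \eqref{eq:localized_Morawetz} by a vector-field (Morawetz-type) identity in the spirit of \cite{Wei}, integrated over the truncated forward light cone
\[
\mathcal{C}_T=\{(t,x):0\le t\le T,\ |x|\le t+1\},
\]
and then let $T\to\infty$. In null coordinates $a=t+1-x$, $b=t+1+x$ the cone is $\{a\ge 0,\ b\ge 0\}$, and the weight factors as $(t+1)^2-x^2=ab$.

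\textbf{Step 1 (the identity).} Work with the multiplier $X=a\,\partial_b$-type vector fields combined with the zeroth-order term $Q=\square(u^2)$, following \cite{Wei}. Concretely, consider
\[
\partial_t\Big(\tfrac{1}{t+1}\big[a^2(T_{00}-T_{01})+b^2(T_{00}+T_{01})\big]\Big)+\partial_x(\cdots) \;+\; c\,\square\!\Big(\tfrac{u^2}{t+1}\Big)\text{-correction}.
\]
The aim is to combine \eqref{ConsLaws} with $L_\pm$ so that the divergence reduces to:
\begin{itemize}
\item a sum of squares $\tfrac{1}{t+1}\big(a L_- u + u\big)^2$ and $\tfrac{1}{t+1}\big(b L_+ u + u\big)^2$-type terms, which carry a good sign;
\item the potential term $\tfrac{ab}{(t+1)^3}Ku^4$ with a positive coefficient, arising from $T_{00}-T_{11}=\tfrac12Ku^4$ after dividing by $t+1$;
\item a $K'$-term proportional to $x\,K'(x)\,u^4$ times a nonnegative weight.
\end{itemize}
By the repulsivity condition $xK'\le 0$, the $K'$-term also has a favorable sign. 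I would first verify the algebra with the candidate weights $a^2,b^2$ and normalization $(t+1)^{-1}$, adjusting the $u^2$ correction so that the cross terms $u\,L_\pm u$ are absorbed into complete squares.

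\textbf{Step 2 (integrate over $\mathcal{C}_T$).} Apply the divergence theorem on $\mathcal{C}_T$. The boundary consists of three pieces, handled as follows.
\begin{itemize}
\item \emph{Bottom $\{t=0\}$.} Here $|x|\le 1$, and the term is bounded by $E_0$ together with $\|u_0\|_{L^\infty}$, which is controlled by the $H^1$ norm of data of support in $[-R,R]$.
\item \emph{Top $\{t=T\}$.} This term is controlled by a nonnegative quantity up to lower-order pieces like $\tfrac{1}{T+1}\int_{|x|\le T+1}u^2$ and $\int(\text{weight})\,u\,\partial u$. Since the $T$-dependence of these must cancel, I would arrange the flux on $\{t=T\}$ to be nonnegative (it is a weighted sum of squares) and simply discard it.
\item \emph{Lateral null boundaries $x=\pm(t+1)$.} On $x=t+1$ we have $a=0$, so only $b^2(L_+u)^2/(t+1)\sim(t+1)(L_+u)^2$-type terms and $u^2$ terms survive.
\end{itemize}

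\textbf{Step 3 (the main obstacle: the null boundary).} The lateral contributions are the delicate part. On the null line $x=t+1>1$ we have $K=0$, and by \eqref{MOC} and Proposition~\ref{global bound} the solution is transported there. Consequently $L_+u=0$ on $x=t+R'$ for $R'\ge R$, but not necessarily on $x=t+1$ when $R>1$.

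I would therefore enlarge the cone to $|x|\le t+R$ and use $t+R$ in place of $t+1$. By finite speed of propagation (Remark~\ref{R:nodecay}), $u$ vanishes on $|x|=t+R$, so every null-boundary term vanishes outright. The weights then change to $(t+R)^2-x^2$ and $(t+R)^{-3}$, which are comparable to the stated ones, with constants depending on $R$, on $\supp K\subset[-1,1]$. The remaining zeroth-order terms involving $u^2$ are bounded using the uniform bound \eqref{Linftybound}; this is exactly where the $L^\infty_{t,x}$ control enters.

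Finally, let $T\to\infty$ by monotone convergence to obtain \eqref{eq:localized_Morawetz}.
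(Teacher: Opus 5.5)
\textbf{Gap in Step~1.} Step~1 is the heart of the proposition, and the identity there is never actually established; the concrete candidate you write down does not work. Your density $\frac{1}{t+1}[a^2(T_{00}-T_{01})+b^2(T_{00}+T_{01})]$ is the conformal energy density divided by $t+1$. To see what it produces, set $P_-=\frac{a^2}{(t+1)^k}(L_-u)^2+\frac{b^2}{2(t+1)^k}Ku^4$ and $P_+=\frac{b^2}{(t+1)^k}(L_+u)^2+\frac{a^2}{2(t+1)^k}Ku^4$. Using $L_+a=L_-b=0$, $L_-a=L_+b=2$ and $L_\pm(L_\mp u)^2=-\frac12 K L_\mp(u^4)$, one finds
\[
L_+P_-+L_-P_+=-\tfrac{k}{(t+1)^{k+1}}\bigl[a^2(L_-u)^2+b^2(L_+u)^2\bigr]+\tfrac{(4-k)(t+1)^2-kx^2}{(t+1)^{k+1}}Ku^4+\tfrac{2xK'}{(t+1)^{k-1}}u^4 .
\]
The kinetic part can be made nonnegative only by adding $cQ$, where $Q=\Box(u^2)=2Ku^4-2L_+u\,L_-u$ and $|c|\le \frac{k\,ab}{(t+1)^{k+1}}$. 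The best choice, $c=\frac{k\,ab}{(t+1)^{k+1}}$, gives
\[
cQ-L_+P_--L_-P_+=\tfrac{k}{(t+1)^{k+1}}(aL_-u-bL_+u)^2+\tfrac{(3k-4)(t+1)^2-kx^2}{(t+1)^{k+1}}Ku^4-\tfrac{2xK'}{(t+1)^{k-1}}u^4 .
\]
For your normalization $k=1$, the coefficient of $Ku^4$ is $-\frac{(t+1)^2+x^2}{(t+1)^2}<0$. The potential term therefore pushes the weighted energy up rather than being controlled by it, and no spacetime bound follows, however the boundary terms are treated. (A homogeneity count already shows the mismatch: density weights $a^2/(t+1)$ have degree $1$, so they cannot produce a bulk weight $\frac{ab}{(t+1)^3}$ of degree $-1$.) The paper takes $k=2$. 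Then the coefficient is exactly $\frac{2ab}{(t+1)^3}$, the square is $\frac{2}{(t+1)^3}(aL_-u-bL_+u)^2$, and $-\frac{2xK'}{t+1}u^4\ge 0$ by repulsivity; this is \eqref{4.1}.

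\textbf{The zeroth-order terms and the remaining steps.} The zeroth-order terms are not absorbed into squares such as $(aL_-u+u)^2$. In the correct identity, $\frac{2ab}{(t+1)^3}Q$ is integrated by parts twice. The bulk remainder $u^2\,\Box\bigl(\frac{ab}{(t+1)^3}\bigr)$ has no sign, but it is at most $8u^2(t+1)^{-3}$ on the cone and hence integrable by \eqref{Linftybound}. Its boundary form vanishes on the null lines and is $O\bigl(\|u\|_{L^\infty_{t,x}}E_0^{1/2}+\|u\|_{L^\infty_{t,x}}^2\bigr)$ on time slices. So nothing on $\{t=T\}$ needs to ``cancel'': the $P_\pm$ flux there enters with a favorable sign and is in any case at most $16E_0$.

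Your Step~3 enlarges the cone to $|x|\le t+R$, so that the lateral fluxes vanish by finite speed of propagation ($L_\pm$ is tangent to the lines $x=\pm(t+R)$, on which $u\equiv0$), and then compares the weights on $\supp K\subset[-1,1]$. This is a legitimate alternative to the paper's use of \eqref{EQ:adaptedconslaw}--\eqref{EQ:adaptedconslaw2}. But those lateral terms were never the real obstacle. What is missing is the correctly normalized multiplier, $(t+1)^{-2}$ rather than $(t+1)^{-1}$, together with the $\frac{2ab}{(t+1)^3}Q$ correction.
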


\begin{proof} The proof follows \cite{Wei} fairly closely.

First, direct calculation shows that 
\begin{align}
\begin{cases}
L_+(L_-u)^2 = -\frac{1}{2}K(x)L_-(u^4)\\
L_-(L_+u)^2 = -\frac{1}{2}K(x)L_+(u^4),
\end{cases}
\end{align}
which imply
\begin{equation}\label{Morawetz1}
\begin{cases}
L_+(\frac{a^2}{(t+1)^2}(L_-u)^2) + 2 \frac{K(x)}{4}L_-(\frac{a^2}{(t+1)^2}u^4) = \frac{-2a^2}{(t+1)^3}(L_-u)^2 + \frac{abK(x)}{(t+1)^3} u^4, \\
L_-(\frac{b^2}{(t+1)^2}(L_+u)^2) + 2 \frac{K(x)}{4}L_+(\frac{b^2}{(t+1)^2}u^4) = \frac{-2b^2}{(t+1)^3}(L_+u)^2 + \frac{abK(x)}{(t+1)^3} u^4.
\end{cases}
\end{equation} 
Defining
\begin{align*}
P_- &= \frac{a^2}{(t+1)^2}(L_-u)^2 + \frac{2b^2}{(t+1)^2}\frac{1}{4}K(x)u^4,\\
P_+ &= \frac{b^2}{(t+1)^2}(L_+u)^2 + \frac{2a^2}{(t+1)^2}\frac{1}{4}K(x)u^4,
\end{align*}
we sum the identities in \eqref{Morawetz1} and use $Q=\Box(u^2)$ and the repulsivity condition $xK'(x)\leq 0$ to obtain
\begin{align}\label{4.1}
\frac{2ab}{(t+1)^3} K(x)u^4 
% &\leq \frac{2abQ}{(t+1)^3} - \left[L_+(P_-)-\frac{b^2}{2(t+1)^2}u^4K'(x) \right] \\&\quad-\left[ L_-(P_+) + \frac{a^2}{2(t+1)^2}u^4K'(x)\right] \nonumber \\
&\leq  \frac{2abQ}{(t+1)^3} - L_+(P_-) - L_-(P_+) + \frac{2xK'(x)u^4}{(t+1)} \nonumber \\
&\leq \frac{2abQ}{(t+1)^3} - L_+(P_-) - L_-(P_+).
\end{align}

Now, we integrate the expression \eqref{4.1} in the domain
\[
\Sigma_1^T = \{ 0\leq t \leq T, \quad|x| \leq t+1 \}.
\]
The left-hand side corresponds to the left-hand side of \eqref{eq:localized_Morawetz}.  Thus it remains to estimate the resulting terms on the right-hand side. 

We first consider the term in \eqref{4.1} involving $Q$.  Using Stokes' theorem, we begin by writing
\[
\int_{\Sigma_1^T} \tfrac{ab}{(t+1)^3} Q \,dx\,dt = \int_{\Sigma_1^T} u^2\, \Box \bigl(\tfrac{ab}{(t+1)^3}\bigr) \,dx \,dt + \int_{\partial \Sigma_1^T} w_1,
\]
where
\[
\omega_1 = \bigl[\tfrac{ab\,\partial_x (u^2)}{(t+1)^{3}}  - u^2 \partial_x\bigl(\tfrac{ab}{(t+1)^{3}}\bigr)\bigr]\,dt + \bigl[\tfrac{ab\,\partial_t(u^2)}{(t+1)^{3}}  -  u^2 \partial_t\bigl(\tfrac{ab}{ (t+1)^{3}}\bigr)\bigr]\,dx.
\]
As $|x| \leq (t+1)$, direct computation gives the bound
\[
\bigl|\Box \bigl(\tfrac{ab}{(t+1)^3}\bigr) \bigr|\leq \tfrac{8}{(t+1)^3}.
\]
Thus
\[
\biggl|\int_{\Sigma_1^T} u^2 \,\Box (\tfrac{ab}{(t+1)^3}) \, dx\, dt \biggr|\lesssim \|u\|_{L_{t,x}^\infty}^2 \int_0^T\int_{-t-1}^{t+1} \tfrac{1}{(t+1)^3} \,dx\,dt \lesssim \|u\|_{L_{t,x}^\infty}^2.
\]

To compute the integral of $w_1$ on the boundary, we split it into four components $\{\Gamma_j\}_{j=1}^4$. 
In the null segments
\[
\Gamma_1 = \{x=t+1, \quad 0\leq t\leq T\} \qtq{and} \Gamma_2 = \{x=-1-t, \quad 0\leq t\leq T\}
\]
the form $w_1$ is zero. On the constant time slices 
\[
\Gamma_3 = \{t=0, \quad |x|\leq 1\} \qtq{and} \Gamma_4 = \{t= T, \quad |x|\leq T+1 \},
\]
the differential form reduces to
\[
w_1 = (ab(t+1)^{-3} \partial_t(u^2) -  u^2 \partial_t(ab (t+1)^{-3}))\,dx.
\]
Noting that for $|x|\leq (t+1)$,
\[
|ab (t+1)^{-3}| \leq (t+1)^{-1}, \quad |\partial_t(ab (t+1)^{-3})| \leq 2 (t+1)^{-2},
\]
we use conservation of energy to obtain
\begin{align}
\int_{-t-1}^{t+1} & | (ab(t+1)^{-3} \partial_t(u^2) -  u^2 \partial_t(ab (t+1)^{-3})) | \,dx\\ &\lesssim \int_{-t-1}^{t+1} (t+1)^{-1}| \partial_t u||u| + (t+1)^{-2} |u|^2 dx  \\
& \lesssim \|u\|_{L_{t,x}^\infty} E_0^{\frac12} + \|u\|_{L_{t,x}^\infty}^2.
\end{align}

We turn to the contribution of the $P_\pm$ terms in \eqref{4.1}.  By Stokes' Theorem,
\begin{align}
-\int_{\Sigma_1^T}&  L_+P_- + L_-P_+\,dx\,dt = \int_{\partial\Sigma_1^T} (P_-+P_+)\,dx - (P_--P_+)\,dt.\\
&= \int_{-t-1}^{t+1} (P_- + P_+)\,dx \bigr|_{t=T}^{t=0} + 2 \int_{0}^T P_+(t,t+1) + P_-(t,-t-1)\, dt.
\end{align}

We now use the conservation of energy adapted to the null lines $\Gamma_1$ and $\Gamma_2$ defined above (cf. \eqref{EQ:adaptedconslaw} and \eqref{EQ:adaptedconslaw2}). Noting that 
\[
P_+(t,t+1) = 4(L_+u)^2 \leq 16T_{00}(t,t+1),
\]
we have 
\[
\int_0^T T_{00}(t,t+1) \,dt = \int_{t+1}^{\infty} T_{00}(t,x)\,dx \big|_{t=T}^{t=0} \leq 2E_0.
\]
Similarly, 
\[
\int_0^T P_-(t,-t-1)\,dt \lesssim E_0.
\]
Finally, noting that in the region $|x|\leq t+1$ we have
\[
P_- + P_+ \leq 4((L_+ u)^2 + (L_-u)^2) + 8 \cdot\tfrac{1}{4} K(x)u^4,
\]
we obtain
\[
\left| \int_{-t-1}^{t+1} (P_- + P_+)\,dx \big|_{t=T}^{t=0} \right| \leq 16 E_0.
\]
Combining the estimates above and appealing to Proposition~\ref{global bound}, we complete the proof of \eqref{eq:localized_Morawetz}. \end{proof}

Using \eqref{eq:localized_Morawetz} and the fact that $\text{supp}(K)\subset[-1,1]$, we can obtain the following estimate, which will be the key input for the proof of scattering in the following section.
\begin{corollary}\label{rem:mor} Let $u,K$ be as in the statement of Proposition~\ref{PROP:Morawetz}. Then
\[
\int_0^\infty \int_{-1}^1 \tfrac{K(x)}{1+t}[u(t,x)]^4 \,dx\,dt \lesssim 1.
\]
\end{corollary}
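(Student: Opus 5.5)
The plan is to compare the weight in \eqref{eq:localized_Morawetz} with $\tfrac{1}{1+t}$ on the support of $K$.

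Since $K$ is supported in $[-1,1]$, the integrand in the Corollary vanishes unless $|x|\le 1$. For every $t\ge 0$ we have $1\le t+1$, so the region $\{|x|\le 1\}$ lies inside $\{|x|\le t+1\}$. Therefore
\[
\int_0^\infty\int_{-1}^1 \tfrac{K(x)}{1+t}u^4\,dx\,dt=\int_0^\infty\int_{-t-1}^{t+1}\tfrac{K(x)}{1+t}u^4\,dx\,dt .
\]
It then suffices to show that $\tfrac{1}{1+t}$ is bounded by a constant times the Morawetz weight $\tfrac{(t+1)^2-x^2}{(t+1)^3}$ whenever $|x|\le 1$.

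For $|x|\le 1$ we have
\[
\frac{(t+1)^2-x^2}{(t+1)^3}\ \ge\ \frac{(t+1)^2-1}{(t+1)^3}=\frac{t(t+2)}{(t+1)^3}.
\]
For $t\ge1$ this is at least $\tfrac{(t+1)^2/2}{(t+1)^3}=\tfrac{1}{2(t+1)}$, because $t(t+2)\ge \tfrac12(t+1)^2$ when $t\ge1$. On $[1,\infty)$ the Corollary's integrand is therefore at most twice the integrand of \eqref{eq:localized_Morawetz}, and Proposition~\ref{PROP:Morawetz} bounds that portion.

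The only real difficulty is the short-time piece $t\in[0,1]$, where the lower bound on the weight degenerates near $x=\pm1$ as $t\to0$. I would not rely on the Morawetz estimate there. Instead I would use Proposition~\ref{global bound} directly:
\[
\int_0^1\int_{-1}^1 \tfrac{K}{1+t}u^4\,dx\,dt\le 2\|K\|_{L^\infty}\|u\|_{L^\infty_{t,x}}^4\lesssim_{K,E_0,R}1 .
\]
An alternative for this piece is conservation of energy, since $\int K u^4\,dx\le 4E_0$ for every $t$.

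Adding the two time ranges proves the Corollary, with implicit constant depending on $K,E_0,R$ as in Proposition~\ref{PROP:Morawetz}.
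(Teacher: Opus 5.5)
Your proof is correct and follows the paper's approach. The paper only says the corollary follows from \eqref{eq:localized_Morawetz} together with $\supp K\subset[-1,1]$. Your bound $\frac{(t+1)^2-x^2}{(t+1)^3}\ge \frac{1}{2(t+1)}$ for $|x|\le 1$, $t\ge 1$, plus the trivial estimate on $[0,1]$ (via energy or Proposition~\ref{global bound}), supplies the details left implicit there, including the degeneracy of the weight near $(t,x)=(0,\pm 1)$.
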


%%%%%%%%%%%%%%%%%%
\section{Proofs of the main results}\label{S:proof}
In this section we prove our main results, Theorems~\ref{T1}~and~\ref{T2}.  We recall the spaces $X,Y$ introduced in \eqref{XY}.

\begin{proof}[Proof of Theorem~\ref{T1}] Let $u$ be the solution to \eqref{NONLIN} as in the statement of Theorem~\ref{T1}. Recalling \eqref{Group}, we can write 
\begin{align}
S(-t)(u(t),\partial_t u(t))
&= (u_0,u_1) - (N(t), M(t)).
\end{align}
where
\begin{equation}\label{NM}
\begin{aligned}
N(t) &:= \tfrac12 \int_0^t \int_{x-s}^{x+s} K(y)[u(s,y)]^3\,dy\,ds, \\
M(t) &:=\tfrac12 \int_0^t K(x+s)[u(s,x+s)]^3 + K(x-s)[u(s,x-s)]^3\,ds. 
\end{aligned}
\end{equation}
Theorem~\ref{T1} is then equivalent to the assertion that $(N(t),M(t))$ has a limit in $Y$ as $t\to\infty$. Thus we define
\begin{align*}
\tilde N(t) &:= \int_t^\infty \int_{x-s}^{x+s} K(y)[u(s,y)]^3\,dy\,ds, \\
\tilde M(t) &:= \int_t^\infty K(x+s)[u(s,x+s)]^3+K(x-s)[u(s,x-s)]^3\,ds 
\end{align*}
and aim to show that $(\tilde N(t),\tilde M(t))\to 0$ in $Y$ as $t\to\infty$. 

We first compute
\[
\partial_x \tilde N(t) = \int_t^{\infty}K(x+s)[u(s,x+s)]^3\,ds - \int_t^{\infty} K(x-s)[u(s,x-s)]^3\,ds
\]
and focus on estimating the first integral, as the computations used to treat the second are similar.  We estimate by duality and fix $\varphi \in L^2(\R;\langle x\rangle\,dx)$. Using the support properties of $K$, Fubini's Theorem, and changing variables (first via $y=x+s$ and second via $\tau=y-x$), we have
\begin{align*}
I:= \langle \partial_x \tilde N(t),\varphi\rangle&= \int_\R \int_t^\infty K(x+s)[u(s,x+s)]^3\varphi(x)\,ds\,dx \\
& =\int_\R \int_{t+x}^\infty K(y)[u(y-x,y)]^3\varphi(x)\,dy\,dx \\
& = \int_{-1}^1 \int_{-\infty}^{y-t} K(y)[u(y-x,y)]^3\varphi(x)\,dx\,dy \\
& = \int_{-1}^1 \int_t^\infty K(y)[u(\tau,y)]^3\varphi(y-\tau)\,d\tau\,dy.  
\end{align*}
We now apply Cauchy--Schwarz and Proposition~\ref{global bound} to obtain
\begin{align*}
|I|&  \lesssim \biggl(\int_{-1}^1 \int_t^\infty\tfrac{K^2(y)}{1+\tau}[u(\tau,y)]^6\,d\tau\,dy\biggr)^{\frac12}\biggl(\int_{-1}^1 \int_t^\infty (1+\tau)[\varphi(y-\tau)]^2\,d\tau\,dy\biggr)^{\frac12}. \\
&\lesssim \|K\|_{L_x^\infty}^{\frac12}\|u\|_{L_{t,x}^\infty}\biggl(\int_{-1}^1 \int_t^\infty\tfrac{K(y)}{1+\tau}[u(\tau,y)]^4\,d\tau\,dy\biggr)^{\frac12}\\
&\quad\quad\quad\quad\quad\quad\quad\quad\quad\times \biggl(\int_{-1}^1 \int_t^\infty (1+\tau)[\varphi(y-\tau)]^2\,d\tau\,dy\biggr)^{\frac12}.
\end{align*}
%\begin{align*}
%|I|& \lesssim \int_{-1}^1 \biggl(\int_t^\infty \tfrac{K^2(y)}{1+\tau}[u(\tau,y)]^6\,d\tau\biggr)^{\frac12}\biggl(\int_t^\infty (1+\tau)[\varphi(y-\tau)]^2\,d\tau\biggr)^{\frac12}\,dy  \\
%&\lesssim \|K\|_{L_x^\infty}^{\frac12}\|u\|_{L_{t,x}^\infty}\biggl(\int_{-1}^1 \int_t^\infty\tfrac{K(y)}{1+\tau}[u(\tau,y)]^4\,d\tau\,dy\biggr)^{\frac12}\\
%&\quad\quad\quad\quad\quad\quad\quad\quad\quad\times \biggl(\int_{-1}^1 \int_t^\infty (1+\tau)[\varphi(y-\tau)]^2\,d\tau\,dy\biggr)^{\frac12} \\
%& \lesssim \biggl(\int_{-1}^1 \int_t^\infty\tfrac{K(y)}{1+\tau}[u(\tau,y)]^4\,d\tau\,dy\biggr)^{\frac12}\biggl(\int_{-1}^1 \int_t^\infty (1+\tau)[\varphi(y-\tau)]^2\,d\tau\,dy\biggr)^{\frac12}.
%\end{align*}
By Corollary~\ref{rem:mor}, we have
\[
\int_{-1}^1 \int_t^\infty\tfrac{K(y)}{1+\tau}[u(\tau,y)]^4\,d\tau\,dy = o(1) \qtq{as}t\to\infty. 
\]
On the other hand, by a change of variables and the fact that $y\in[-1,1]$,
\begin{align*}
\biggl|\int_{-1}^1 \int_t^\infty (1+\tau)[\varphi(y-\tau)]^2\,d\tau\,dy \biggr|& = \biggl|\int_{-1}^1\int_{-\infty}^{y-t} (1+y-x)[\varphi(x)]^2\,dx\,dy\biggr| \\
& \lesssim \int_\R \langle x\rangle[\varphi(x)]^2\,dx. 
\end{align*}
Using Proposition~\ref{global bound} as well, it follows that
\[
|I|\lesssim \|\varphi\|_{L^2(\langle x\rangle\,dx)} \cdot o(1) \qtq{as}t\to\infty. 
\]

We therefore derive that
\[
\|\partial_x \tilde N(t)\|_{L^2(\langle x\rangle^{-1}\,dx)} = o(1) \qtq{as}t\to\infty.
\]
As similar arguments show that
\[
\|\tilde M(t)\|_{L^2(\langle x\rangle^{-1}\,dx)} = o(1) \qtq{as}t\to\infty,
\]
we obtain the convergence $(\tilde N(t),\tilde M(t))\to 0$ in $Y$ as $t\to\infty$.  In particular, defining $(N_+,M_+)$ as the complete time integrals in \eqref{NM}, we obtain that $(N(t),M(t))\to (N_+,M_+)$ in $Y$ as $t\to\infty$.  

We therefore obtain the desired scattering statement with
\begin{equation}\label{defplus}
(U_+,V_+) := (u_0-N_+,u_1-M_+),
\end{equation}
which completes the proof of Theorem~\ref{T1}.\end{proof}

As described in the introduction, Theorem~\ref{T1} allows us to define the scattering map $\Phi:X\to Y$ given by $\Phi((u_0,u_1))=(U_+,V_+)$.  Our second main result, Theorem~\ref{T2}, asserts that the scattering map uniquely determines the nonlinear coefficient in \eqref{NONLIN}.

\begin{proof}[Proof of Theorem~\ref{T2}] Let $K_1, K_2$ be admissible inhomogeneities, with corresponding scattering maps $\Phi_1,\Phi_2:X\to Y$.  It suffices to establish the stability estimate \eqref{stability}.

Let us fix $x_0\in[-2,2]$ and choose $\varphi\in C_c^\infty([-1,1];\R)$ such that
\[
\int_\R [\varphi(x)]^4\,dx = 1. 
\]
For each $n\geq 1$, we consider the initial condition $(v_{0,n},v_{1,n})\in X$ defined by
\[
v_{0,n}(x) = n^{-1} \varphi(n(x-x_0)),\quad v_{1,n}(x) = -\varphi'(n(x-x_0)), 
\]
which leads to the following right-traveling solution to \eqref{LIN}:
\begin{equation}\label{uLdef}
u_{L,n}(t,x) = n^{-1}\varphi(n(x-x_0-t)),
\end{equation}
which we observe satisfies
\begin{equation}\label{uLn-bds}
\|u_{L,n}\|_{L_{t,x}^\infty([0,\infty)\times\R)} \lesssim n^{-1}. 
\end{equation}

Now let us define the solutions $u_{1,n},u_{2,n}$ to
\[
\begin{cases} \Box u_{j,n} = K_j u_{j,n}^3, \\ (u_{j,n},\partial_t u_{j,n})|_{t=0}=(v_{0,n},v_{1,n})\end{cases}
\]
and define the scattered states $(U_{j,n},V_{j,n})\in Y$ by
\[
(U_{j,n},V_{j,n}):=\Phi_j((v_{0,n},v_{1,n})).
\]

Now, for all $n$ sufficiently large we may apply Proposition~\ref{P:ST-small} to obtain 
\begin{equation}\label{ujn-bds}
\|u_{j,n}\|_{L_{t,x}^\infty([0,4]\times\R)} \lesssim n^{-1} \qtq{and} \|u_{j,n} - u_{L,n}\|_{L_{t,x}^\infty([0,4]\times\R)} \lesssim n^{-3}
\end{equation}
for $j\in\{1,2\}$, uniformly in $n$. 

Using \eqref{NM}--\eqref{defplus}, we have that for $j\in\{1,2\}$, 
\begin{align*}
V_{j,n}(x) & = v_{1,n}(x)-\tfrac12\int_0^\infty K_j(x+s)u_{j,n}^3(s,x+s) + K_j(x-s)u_{j,n}^3(s,x-s)\,ds \\
& = v_{1,n}(x) - \tfrac12\int_0^\infty K_j(x+s)u_{L,n}^3(s,x+s)+K_j(x-s)u_{L,n}^3(s,x-s)\,ds \\
& \quad + \tfrac12 G_{j,n}(x), 
\end{align*}
where
\[
G_{j,n}(x):=\int_0^\infty K_j(x+s)(u_{L,n}^3 - u_{j,n}^3)(s,x+s)+ K_j(x-s)(u_{L,n}^3 -u_{j,n}^3)(s,x-s)\,ds.
\]
Thus, denoting
\[
V_n = 2[V_{1,n}-V_{2,n}],\quad K=K_1-K_2, \qtq{and} G_n = G_{1,n}-G_{2,n},
\]
we may write 
\begin{align*}
 \int_0^\infty & K(x+s)u_{L,n}^3(s,x+s)\,ds \\ 
&\quad = - V_n(x) - \int_0^\infty K(x-s)u_{L,n}^3(s,x-s)\,ds  + G_n(x).
\end{align*}

We now take an inner product of this expression with $n^{4}\varphi(n(x-x_0))$. Recalling \eqref{uLdef}, this yields 
\begin{align}
 \int_0^\infty&\int_\R K(x+s) n [\varphi(n(x-x_0))]^4\,dx\,ds \label{pft2-main} \\
& = -n^4 \int_\R \varphi(n(x-x_0))V_n(x) \,dx \label{pft2-v}\\
& \quad  - \int_0^\infty\int_\R K(x-s) n\varphi(n(x-x_0))[\varphi(n(x-2s-x_0))]^3\,dx\,ds \label{pft2-1}\\
& \quad + n^{4}\int_\R \varphi(n(x-x_0))G_n(x)\,dx. \label{pft2-2}
\end{align}
We will show that \eqref{pft2-main} may be used to recover $\int_{x_0}^\infty K$, while \eqref{pft2-v} may be controlled by the difference of the scattering maps and \eqref{pft2-1}--\eqref{pft2-2} may be treated perturbatively. 

We begin by estimating the error terms.  We claim that for $n$ sufficiently large, 
\begin{equation}\label{T2error1}
|\eqref{pft2-1}| \lesssim n^{-1} \qtq{and}|\eqref{pft2-2}| \lesssim n^{-2}.
\end{equation}

For the first estimate in \eqref{T2error1}, we begin by changing variables $y=n(x-x_0)$ and $\tau = 2ns$ to write
\begin{align*}
\eqref{pft2-1} = -\tfrac12 n^{-1} \int_0^\infty\int_\R K(x_0+\tfrac{2y-\tau}{2n})\varphi(y)[\varphi(y-\tau)]^3\,dy\,d\tau.
\end{align*}
Recalling the support properties of $\varphi$, we see that for the integrand to be nonzero we must have $y$ and $\tau$ restricted to bounded intervals, which yields the desired estimate.

For the second estimate in \eqref{T2error1}, we first observe that using the support properties of $\varphi$ and the fact that $x_0\in[-2,2]$, we have
\[
\biggl| n^4 \int_\R G_n(x) \varphi(n(x-x_0))\,dx \biggr| \lesssim_\varphi n^3\|G_n\|_{L_x^\infty(|x|\leq 3)}
\]
for all $n$ large. Recalling the definition of $G_n$, we see that it will suffice to estimate the quantities
\[
\int_0^\infty K_j(x+s)(u_{L,n}^3-u_{j,n}^3)(s,x+s) + K_j(x-s)(u_{L,n}^3 - u_{j,n}^3)(s,x-s)\,ds
\]
in $L_x^\infty(|x|\leq 3)$.  Recalling that $\text{supp}(K_j)\subset[-1,1]$, we see that for $|x|\leq 3$ this integral may be restricted to $s\in[0,4]$. Using \eqref{uLn-bds} and \eqref{ujn-bds}, we can therefore obtain that 
\begin{align*}
n^3\|G_n\|_{L_x^\infty(|x|\leq 3)} & \lesssim n^3 \sum_{j=1}^2 \|u_{L,n}^3-u_{j,n}^3\|_{L_{t,x}^\infty([0,4]\times\R)} \\
& \lesssim n^3 \sum_{j=1}^2(\|u_{L,n}\|_{L_{t,x}^\infty}^2 + \|u_{j,n}\|_{L_{t,x}^\infty([0,4]\times\R)}^2)\|u_{j,n}-u_{L,n}\|_{L_{t,x}^\infty([0,4]\times\R)} \\
& \lesssim n^{-2},
\end{align*}
as desired. 

We turn to the estimate of \eqref{pft2-v}. To simplify the notation, let us denote
\[
\|\Phi_2-\Phi_1\|_{X\to Y} = \sup_{(u_0,u_1)\in X\backslash\{0\}} \frac{\|\Phi_2((u_0,u_1))-\Phi_1((u_0,u_1))\|_{Y}}{\|(u_0,u_1)\|_{\dot H^1\times L^2}}. 
\]
Using the support properties of $\varphi$, we then have 
\begin{equation}\label{T2errorv}
\begin{aligned}
|\eqref{pft2-v}| & \lesssim n^4 \|\varphi(n(\cdot-x_0))\|_{L^2(\langle x\rangle\,dx)} \|V_n\|_{L^2(\langle x\rangle^{-1}\,dx)} \\
& \lesssim n^4 \|\varphi(n\cdot)\|_{L^2} \|\Phi_2-\Phi_1\|_{X\to Y} \|(v_{0,n},v_{1,n})\|_{\dot H^1\times L^2} \\
& \lesssim n^3 \|\Phi_2-\Phi_1\|_{X\to Y}. 
\end{aligned}
\end{equation}

Finally, we rewrite \eqref{pft2-main} as
\begin{align*}
\eqref{pft2-main} = \int_\R \biggl[\int_{x_0}^\infty K(s+\tfrac{y}{n})\,ds\biggr] \, [\varphi(y)]^4\,dy.  
\end{align*}
Using the support properties of $K$ and $\varphi$, we observe that the integral over $s\in[x_0,\infty)$ may be restricted to $[-4,4]$ for all $n$ sufficiently large. Thus (recalling the normalization of $\varphi$ and using the Fundamental Theorem of Calculus) we may write
\begin{align*}
\biggl|\eqref{pft2-main} - \int_{x_0}^\infty K(s)\,ds \biggr| & \leq 8 \int_\R  |\tfrac{y}{n}| \|K'\|_{L^\infty} |\varphi(y)|^4\,dy \lesssim n^{-1}.  
\end{align*}
Thus, returning to \eqref{pft2-main}--\eqref{pft2-2}, applying \eqref{T2error1} and \eqref{T2errorv}, we obtain
\[
\biggl|\int_{x_0}^\infty [K_1(s)-K_2(s)]\,ds\biggr| \lesssim n^3\|\Phi_2-\Phi_1\|_{X\to Y} + n^{-1}. 
\]
Choosing $n$ to be a large multiple of $\|\Phi_2-\Phi_1\|_{X\to Y}^{-\frac14}$, this yields
\[
\biggl|\int_{x_0}^\infty [K_1(s)-K_2(s)]\,ds\biggr|  \lesssim \|\Phi_2-\Phi_1\|_{X\to Y}^{\frac14}. 
\]
As $x_0\in[-2,2]$ was arbitrary, we obtain that 
\[
\| \mathcal{K}_2 - \mathcal{K}_1\|_{L_x^\infty([-2,2])} \lesssim \|\Phi_2-\Phi_1\|_{X\to Y}^{\frac14}, \qtq{where} \mathcal{K}_j(x):=\int_x^\infty K_j(s)\,ds.
\]

To pass to an estimate on $K_2-K_1$, we recall the support properties of $K$ and use the Landau--Kolmogorov inequality on the interval $[-2,2]$.  This yields
\begin{align*}
\|K_2-K_1\|_{L^\infty} &\lesssim \|\mathcal{K}_2 - \mathcal{K}_1\|_{L^\infty([-2,2])}^{\frac12}\|K_2'-K_1'\|_{L^\infty}^{\frac12} \\
& \lesssim_{K_1,K_2} \|\Phi_2-\Phi_1\|_{X\to Y}^{\frac18},
\end{align*}
which implies \eqref{stability}, as desired. \end{proof}

\appendix

%%%%%%%%
\section{$L^\infty$ decay on bounded intervals}\label{S:Appendix}

In Section~\ref{S:bounds} we established uniform $L_x^\infty$ bounds for compactly supported initial data in $H^1\times L^2$.  In this section we show that the $L_x^\infty$-norm on any bounded spatial interval decays as $t\to\infty$. 

\begin{theorem}\label{THM:pwdecay} Let $(u_0,u_1)\in H^1\times L^2$ with $\supp(u_0,u_1)\subset[-R_0,R_0]$.  Let $K$ be admissible in the sense of Definition~\ref{D:admissible}, and let $u$ denote the global solution to \eqref{NONLIN} with initial data $(u_0,u_1)$.  Then for any $R\geq 1$, 
\[
\lim_{t \to \infty} \|u(t)\|_{L_x^\infty([-R,R])} = 0.
\]
\end{theorem}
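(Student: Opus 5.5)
Our plan is to first establish decay on the support of $K$, or rather on $I_K$ where $K\geq\frac12K(0)$, and then to propagate it outward using the transport structure from Proposition~\ref{global bound}.

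\emph{Step 1: time-averaged decay on $I_K$.} Corollary~\ref{rem:mor} gives
\[
\int_0^\infty \frac{1}{1+t}\int_{I_K}u^4\,dx\,dt<\infty.
\]
Since $\int\frac{dt}{1+t}=\infty$, this alone yields only $\liminf_{t\to\infty}\|u(t)\|_{L^4(I_K)}=0$. To upgrade it to a genuine limit, we control the time derivative of
\[
F(t)=\int \chi_K u^4\,dx .
\]
We have $|F'(t)|\lesssim \|u\|_{L^\infty}^3\|\partial_t u\|_{L^2}\lesssim 1$ by Proposition~\ref{global bound} and energy conservation. So $F$ is uniformly Lipschitz. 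However, a Lipschitz function with $\int F\,\frac{dt}{1+t}<\infty$ need not tend to zero, so this is not enough. We therefore plan to prove directly, via the Duhamel/characteristic representation, that the flux of the solution through the region decays.

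\emph{Step 2: decay via characteristics.} Write $w_\pm=L_\pm u$. From \eqref{est1} and the compact support of the data, for $t>R+R_0+2$ and $|x|\le R$ the terms $w_\pm(0,x\pm t)$ vanish. Hence
\[
w_+(t,x)=-\int_0^t K(x+t-s)\,u^3(s,x+t-s)\,ds,
\]
which is an integral over the short segment of the backward characteristic lying in $[-1,1]$. This has length at most $2$, with $s\in[t-(1-x)\ldots]$ near time $t$. Likewise for $w_-$. Consequently $|\partial_x u(t,x)|\lesssim \sup_{|s-t|\le R+1}\|u(s)\|^3_{L^\infty([-1,1])}$ for $|x|\le R$. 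Integrating in $x$ from a point of $I_K$, it then suffices to show
\[
\|u(t)\|_{L^\infty([-1,1])}\to0 .
\]

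\emph{Step 3: the main obstacle, decay on $[-1,1]$.} Set $m(t)=\sup_{|s-t|\le C}\|u(s)\|_{L^\infty([-1,1])}$. By Step 2 the gradient on $[-1,1]$ is $O(m^3)$, so $u(t)$ is nearly constant there, with value $c(t)$ up to $O(m^3)$. Also $\partial_t u=(w_++w_-)/2=O(m^3)$. Hence if $m(t)\ge\delta$ at a late time $t$, then $|u|\gtrsim\delta$ on $I_K$ for a time interval of length $\gtrsim 1$ around $t$. We would also use $m$ small to get near-constancy; if $m$ is not small we instead use the uniform bounds to get Lipschitz control.

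More robustly, we plan to combine the exact identity $\partial_t u=O(m^3)$ with the following integrated bound, obtained by integrating \eqref{EQ:adaptedconslaw} over $R$ or using the null-line energy flux:
\[
\int_0^\infty\int_{-1}^1 K u^6\,dx\,dt \lesssim \int_0^\infty (w_+^2+w_-^2)(t,\pm2)\,dt\lesssim E_0 .
\]
This holds because $w_+(t,2)=0$ and $w_-(t,2)$ equals exactly the Duhamel integral of $Ku^3$ along the characteristic, and by Cauchy--Schwarz that integral relates to $\int Ku^6$. The flux of energy leaving through $x=\pm2$ is finite.

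We expect the hardest point to be turning this into an actual lower-bound contradiction. Suppose $m(t_k)\ge\delta$ along $t_k\to\infty$. On unit time intervals around $t_k$, with $u$ roughly constant of size $\delta$ on $I_K$, the outgoing flux $w_-(t,2)=-\int Ku^3\,ds$ is at least $c\delta^3$ for a unit time. Summing over infinitely many disjoint such intervals contradicts the finite flux $\int_0^\infty w_-^2(t,2)\,dt\le 2E_0$ from \eqref{EQ:adaptedconslaw}. Making the sign coherence of $u$ along the characteristic rigorous when $m$ is only bounded rather than small is the delicate part. We would handle it using the uniform Lipschitz-in-time control $|\partial_t u|+|\partial_x u|\lesssim 1$ on $[-1,1]$ from Step 2 and Proposition~\ref{global bound}. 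Once decay on $[-1,1]$ holds, Step 2 and integration in $x$ give decay on $[-R,R]$.
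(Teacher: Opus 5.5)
\textbf{The gap is in Step~3.} Your Step~2 is sound and is essentially how the paper finishes. Your diagnosis in Step~1 is also correct: the Morawetz bound plus Lipschitz control of $F$ cannot force $F(t)\to0$. Step~3, however, is the real content of the theorem, and neither mechanism you propose closes it.

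First, the bound $\int_0^\infty\!\int_{-1}^1Ku^6\,dx\,dt\lesssim\int_0^\infty(w_+^2+w_-^2)(t,\pm2)\,dt$ does not follow. For large $t$, $w_-(t,2)=-\int_{-1}^1K(y)u^3(t-2+y,y)\,dy$ is a \emph{signed} integral. Cauchy--Schwarz gives only $w_-(t,2)^2\le\|K\|_{L^1}\int_{-1}^1K(y)u^6(t-2+y,y)\,dy$, which is the reverse inequality. (Finiteness of $\int_0^\infty w_-^2(t,2)\,dt$ is true, but it comes from $\partial_tT_{00}=\partial_xT_{01}$ on $[2,\infty)$, not from \eqref{EQ:adaptedconslaw}, which concerns null lines.)

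Second, your contradiction argument needs $|w_-(t,2)|\gtrsim\delta^3$ on a unit time interval. That requires $u$ to be nearly constant and of one sign along the entire characteristic segment crossing $[-1,1]$. The bound $|\partial_xu|=O(m^3)$ gives this only when $m\ll1$. For general data, $m$ is merely $O_{K,E_0,R}(1)$. A uniform Lipschitz bound then only gives $|u|\ge\delta/2$ on a neighbourhood of size $\sim\delta$, while the rest of the segment (length up to $2$) may contribute to $\int Ku^3$ with the opposite sign. So as written, you get decay only if you already know that $\limsup_{t\to\infty}\|u(t)\|_{L^\infty([-1,1])}$ is small.

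\textbf{The paper's missing ingredient.} The paper uses a second multiplier identity that turns averaged decay into pointwise decay. One integrates
\[
\partial_t\bigl((t+R)T_{00}+xT_{01}\bigr)-\partial_x\bigl((t+R)T_{01}+xT_{11}\bigr)=\tfrac12Ku^4+\tfrac14xK'u^4
\]
over the cone $\{|x|\le t+R,\ 0\le t\le T\}$ with $R\ge R_0$. In this region $(t+R)T_{00}+xT_{01}\ge(t+R)\tfrac14Ku^4$, the null boundary terms are harmless, and repulsivity discards the $xK'u^4$ term. This gives $(T+R)\int Ku^4(T)\,dx\lesssim RE_0+\int_0^T\!\int Ku^4\,dx\,dt$. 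Dividing by $T+R$ and using the Ces\`aro decay from Corollary~\ref{rem:mor} yields $\int Ku^4(T)\,dx\to0$, which is exactly what excludes the sparse bumps you worried about. Gagliardo--Nirenberg on $K^{1/4}u$ plus your Step~2 then finish the proof.

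\textbf{An alternative repair of your flux idea.} You can replace the quantitative lower bound by compactness and rigidity.
By Step~2, $u$ is uniformly Lipschitz on $[t_0,\infty)\times[-2,2]$. Hence time translates $u(t_k+\cdot,\cdot)$ have locally uniform subsequential limits $\bar u$ satisfying the same characteristic integral equations.
Finite flux forces $\bar w_-(\cdot,2)\equiv\bar w_+(\cdot,-2)\equiv0$, so $\bar u$ is constant on $[1,2]$ and on $[-2,-1]$.
Solving the equation sideways in $x$ from $x=1$ then shows $\bar u=\phi(x)$ with $\phi''=K\phi^3$ and $\phi'(\pm1)=0$.
Finally, $\int_{-1}^1(\phi'^2+K\phi^4)\,dx=[\phi\phi']_{-1}^1=0$ together with $K(0)>0$ forces $\phi\equiv0$, so $\|u(t)\|_{L^\infty([-2,2])}\to0$.
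This route uses only $K\ge0$, $K(0)>0$, Proposition~\ref{global bound} and the energy flux; it does not need the Morawetz estimate or repulsivity.
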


\begin{proof} We first claim that 
\[
\lim_{T \to \infty} \tfrac{1}{T} \int_0^T \int_{-1}^1 K(x)[u(t,x)]^4 \,dx\,dt = 0.
\]
To see this, we recall Corollary~\ref{rem:mor}, which implies that given $\ep>0$, there exists $T_0 \geq 1$ such that for any $T>T_0$, 
\[
\tfrac{1}{T} \int_{T_0}^T \int_{-1}^1 K(x)[u(t,x)]^4\,dx\,dt \lesssim \ep.
\]
Thus, by conservation of energy,
\[
\tfrac{1}{T}\int_0^T \int_{-1}^1 K(x)[u(t,x)]^4\,dx\,dt \lesssim \tfrac{T_0}{T} E(u_0,u_1) + \ep,
\]
which implies the claim.

We now upgrade this average decay to a pointwise-in-time estimate.  

Let $R\geq 1$.  Arguing as in the proof of Proposition~\ref{PROP:Morawetz}, we define 
\[
\Sigma_R^T = \{|x|\leq t+ R, \quad 0\leq t \leq T\}
\]
and use the conservation laws \eqref{ConsLaws} to obtain 
\[
\partial_t((t+R)T_{00}+xT_{01}) - \partial_x((t+R)T_{01}+xT_{11}) = 2\tfrac{K(x)}{4}u^4 + \tfrac{xK'(x)u^4}{4}.
\]
Integrating this relation on $\Sigma_R^T$, and using Stokes' Theorem gives us
\begin{align}\label{EQ:aux1}
\int_{\partial \Sigma_R^T} [((t+R)&T_{00}+xT_{01})\,dx + ((t+R)T_{01}+xT_{11})\,dt] \\
&= -2\int_{\Sigma_R^T}\tfrac{K(x)u^4}{4}\,dx\,dt
-\int_{\Sigma_R^T} \tfrac{xK'(x)u^4}{4} \,dx\,dt.
\end{align}

The boundary can be decomposed in four parts, denoted $\{\Gamma_j\}_{j=1}^4$.  Denoting the integrand of the boundary integral by $S$, we have
\begin{align*}
S = (t+R)(L_+u)^2\,dt&\qtq{on}\Gamma_1 = \{x=t+R, \quad 0\leq t \leq T\}, \\
S = -(t+R)(L_-u)^2\,dt&\qtq{on} \Gamma_2 = \{-x=t+R, \quad 0\leq t \leq T\} , \\
S = [(t+R)T_{00}+xT_{01}]\,dx&\qtq{on}\Gamma_3 =\{t=0, \quad |x|\leq R\}, \\
S = [(t+R)T_{00}+xT_{01}]\,dx&\qtq{on}\Gamma_4 = \{t=T, \quad |x|\leq T+ R\}.
\end{align*} 
Thus
\begin{equation}\label{EQ:aux2}
\begin{aligned}
&\underbrace{\int_{-t-R}^{t+R}  [(t+R)T_{00}+xT_{01}]\,dx \big|_{t=0}^{t=T}}_{I} \\
&\quad\quad= \underbrace{-\int_0^T(t+R)[(L_+u(t,t+R))^2{+}(L_-u(t,-t-R))^2]\,dt}_{II} \\
&\quad\quad\quad+ 2\int_{\Sigma_R^T} \tfrac{K(x)u^4}{4}\,dx\,dt  + \int_{\Sigma_R^T} \tfrac{xK'(x)}{4}u^4 \,dx\,dt.
\end{aligned}
\end{equation}

Using the conservation of energy adapted to lines, we can control $II$ as follows:
\begin{align}
|II| \leq 2(T+R) \biggl[ \int_{-\infty}^{-t-R}T_{00}(t,x)\,dx \big|_{t=T}^{t=0} + \int_{t+R}^{\infty}T_{00}(t,x)\,dx \big|_{t=T}^{t=0}\biggr].
\end{align}

In the region $|x| \leq t+R$ we observe the following relations that will allow us to bound $I$ from below:
\begin{align}
\begin{cases}
(t+R)T_{00} + xT_{01} \geq (t+R)(T_{00}-|T_{01}|) \geq (t+R)\frac{K(x)u^4}{4},\\
(t+R)T_{00}+xT_{01} \leq 2(t+R)T_{00}.
\end{cases}
\end{align}

Thus, using the repulsivity condition $xK'(x)\leq 0$, \eqref{EQ:aux2} implies 
\begin{align}
\tfrac14(T+R)\int_{|x|\leq T+R} &{K(x)[u(T,x)]^4}\,dx - 2R\int_{|x|\leq R}T_{00}(0,x)\,dx \\
&\leq 2(T+R)\int_{|x|\geq t+R} T_{00}(t,x)\,dx \big|_{t=T}^{t=0}  +\tfrac12 \int_{\Sigma_R^T} {K(x)u^4} \,dx\,dt,
\end{align}
which yields 
\begin{align}
(T+R)\int_{-1}^1\tfrac14 {K(x)[u(T,x)]^4}dx &\leq 2R \int_{\mathbb{R}}  T_{00}(0,x)\,dx + 2T \int_{|x|\geq R} T_{00}(0,x)\,dx \\
&\quad + \tfrac12 \int_{\Sigma_R^T} {K(x)u^4}\,dx\,dt.
\end{align}
Dividing both sides by $T+R$, taking the limit as $T\to\infty$, and using the average potential energy decay, we obtain
\begin{align}
\limsup_{T\to \infty} \tfrac14 \int_{-1}^1 {K(x)[u(T,x)]^4}\,dx \leq 2\int_{|x|\geq R} T_{00}(0,x)\,dx.
\end{align}
Sending $R \to \infty$, we conclude 
\begin{align}\label{EQ:PED}
\lim_{T\to \infty} \int_{-1}^1 K(x)[u(T,x)]^4\,dx = 0.
\end{align}

It follows that the solution $u$ decays on the set $I_K=\{x:K(x)\geq \tfrac12K(0)\}$. Indeed, the Gagliardo--Nirenberg inequality yields
\begin{align}
\|K^{\frac14}u(t)\|_{L^\infty_x} \leq \| \partial_x [K^{\frac14}u(t)]\|_{L^2_x}^{\frac13} \|K^{\frac14}u(t)\|_{L^4_x}^{\frac23},
\end{align}
so that by conservation of energy, Proposition~\ref{global bound}, and \eqref{EQ:PED} we obtain 
\begin{align}\label{EQ:DecayWK}
\lim_{t \to \infty } \| K^{\frac14}u(t)\|_{L^\infty_x} = 0.
\end{align}

We now extend this decay to an arbitrary bounded region of space. To this end, we fix $R>1$ and let $x \in [-R,R]$. By computations as in \eqref{est1} above, we obtain that for $t$ sufficiently large, 
\begin{align}
\begin{cases}
\partial_t u(t,x) + \partial_x u(t,x) = - \displaystyle\int_0^t K(x+t-s)[u(s,x+t-s)]^3\,ds, \\
\partial_t u(t,x) - \partial_x u(t,x) = - \displaystyle\int_0^t K(x-t+s)[u(s,x-t+s)]^3\,ds.
\end{cases}
\end{align}
Changing variables, this yields 
\begin{equation}\label{char2}
\begin{cases}
\partial_t u(t,x) + \partial_x u(t,x) =  \displaystyle\int_{x}^{x+t} K(y)[u(x+t-y,y)]^3 \,dy \\
\partial_t u(t,x) - \partial_x u(t,x) = - \displaystyle\int_{x-t}^x K(y)[u(y+t-x,y)]^3 \,dy.
\end{cases}
\end{equation}
As $\supp K \subset[-1,1]$, we may therefore apply H\"older's inequality, Proposition~\ref{global bound}, and \eqref{EQ:PED} to find that for $|t|\geq 100R$,
\begin{align*}
\biggl| \int_{x}^{x+t} K&(y)u^3(x+t-y,y) \,dy\biggr|+\biggl|\int_{x-t}^{x} K(y)u^3(x+t-y,y) \,dy\biggr| \\
& \lesssim \int_{-1}^1 K(y) |u(x+t-y,y)|^3\,dy \\
& \lesssim \biggl(\int_{-1}^1 K(y) |u(x+t-y,y)|\,dy\biggr)^{\frac13} \biggl(\int_{-1}^1 K(y) |u(x+t-y,y)|^4\,dy \biggr)^{\frac23} \\
& \lesssim \|K\|_{L_x^\infty}^{\frac13}\|u\|_{L_{t,x}^\infty}^{\frac13} \sup_{T\geq \frac12t} \biggl(\int_{-1}^1 K(y)[u(T,y)]^4\,dy\biggr)^{\frac23} \\
& = o(1) \qtq{as}t\to\infty
\end{align*}
uniformly for $x\in[-R,R]$. In particular, combining the equations in \eqref{char2}, we find that
\[
\|\partial_x u(t,x)\|_{L_x^\infty([-R,R])} = o(1) \qtq{as} t\to\infty.
\]
Thus for any $x\in[-R,R]$ we may use the Fundamental Theorem of Calculus and \eqref{EQ:DecayWK} to obtain
\begin{align*}
|u(t,x)| & \leq |u(t,0)| + R\|\partial_x u(t,x)\|_{L_x^\infty([-R,R])}  \\
& \lesssim (1+R)o(1) \qtq{as}t\to\infty,
\end{align*}
which completes the proof. 
\end{proof}

%\bibliography{Ref}
%\nocite{*}
\end{document}